\documentclass[12pt,twoside,reqno]{amsart}
\usepackage{graphicx} 
\usepackage[T1]{fontenc}
\usepackage[utf8x]{inputenc}
\usepackage[english]{babel}
\usepackage{amsmath}
\usepackage{amsthm}
\usepackage{tensor}
\usepackage{amssymb}
\usepackage{bbm}   
\usepackage{fancyhdr}               
\usepackage{fixltx2e}               
\usepackage{enumitem}               
\usepackage[all,cmtip]{xy}          
\usepackage{graphicx}
\usepackage{esint} 
\usepackage{mathrsfs} 
\usepackage{faktor}
\usepackage{mathtools}
\newtheorem{theorem}{Theorem}            

\newtheorem{lemma}[theorem]{Lemma}

\theoremstyle{definition}              

\theoremstyle{remark}                  

\newcommand{\R}{\mathbb{R}}                 

\newcommand{\eps}{\varepsilon}

\renewcommand{\exp}{\textup{exp}}

\DeclareMathOperator{\curl}{\curl}                                   

\usepackage{color}
\usepackage[normalem]{ulem} 

\numberwithin{equation}{section}
\numberwithin{definition}{section}
\numberwithin{theorem}{section}
\numberwithin{remark}{section}

\begin{document}
\title{On Variational Approximations For Wave Maps}
\author{Zhiyuan Geng, Changyou Wang}
\address{Department of Mathematics, Purdue University, West Lafayette, IN 47907}
\email{geng42@purdue.edu, wang2482@purdue.edu}
\begin{abstract} In this paper, we revisit the existence of global weak solutions of wave maps from $\R^n$ into the sphere $\mathbb{S}^{L-1}$, $\Box u\perp T_u \mathbb{S}^{L-1}$, by establishing it as a singular limit of maps from
$\R^n\times \R_+$ to $\mathbb S^{L-1}$ that minimize elliptic regularized variational functionals that contain an exponential weight in the time direction with a small parameter $\varepsilon$, where the initial data of the Cauchy problem serve as the boundary condition. The idea went back to De Giorgi \cite{Giorgi1996}, which has been implemented by Serra and Tilli \cite{Serra-Tilli2012, Serra-Tilli2016}
for certain class of nonlinear wave equations. This approach 
is also applicable to the $SO(m)$-target manifold. 
\end{abstract}
\date{\today}
\maketitle

\centerline{\it For the Commemoration of Professor Wei-Yue Ding}

\bigskip
\section{Introduction}

Recall that for $n, k\ge 1$ and a given a smooth, $k$-dimensional compact Riemannian manifold $(N,h)\hookrightarrow\mathbb{R}^L$
(isometrically embedded into
the euclidean space $\R^L$ for a sufficiently lareg $L$), 
a map $u:\R^n\times (0,\infty)\to N\hookrightarrow \R^L$ is called a wave map, if it solves the wave map equation:
\begin{align}
&\Box u:=\partial_{tt}u-\Delta u\perp T_u N \ 
\ {\rm{in}}\ \ \R^n\times (0,\infty),\label{wavemap1}\\
&(u, \partial_t u)\big|_{t=0}=(\phi,\psi)\
\ {\rm{in}}\ \ \R^n, \label{cauchydata}
\end{align}
where $\phi:\R^n\to N$ and $\psi: \R^n\to \phi^*TN$, the induced tangent bundle $TN$ by $\phi$, are given initial data. Observe that a wave map $u$ can be viewed as a harmonic map from $(M, g_0)=(\R^n\times (0,\infty), g_0)$ to $N$, 
with the Minkowski metric given by $g_0=dx^2-dt^2$, that is, 
$u:\R^n\times (0,\infty)\to N$ is a critical point of the  functional
\[
H(u)=\iint_{\R^n\times (0,\infty)}
(|\partial_t u|^2-|\nabla u|^2)\,dxdt.
\]
However, since the functional $H$ is neither convex nor coercive, it is impossible to find a wave map through the method of calculus of variations on $H$. When $n\ge 2$ and $N=\mathbb{S}^{L-1}\subset\R^L$ is the unit sphere, Shatah \cite{Shatah1988} was the first to establish the existence of global weak solutions of \eqref{wavemap1}-\eqref{cauchydata} for $(\phi,\psi)\in \dot{H}^1(\R^n,\mathbb S^{L-1})\times L^2(\R^n, \phi^*T\mathbb{S}^{L-1})$ through
approximation by semi-linear wave equations, namely, the Ginzburg-Landau approximation. Later,  Freire \cite{Freire1996} and Zhou \cite{Zhou1999}
extended the global existence of weak solutions of \eqref{wavemap1}-\eqref{cauchydata} to Riemannian homogeneous spaces by Ginzburg-Landau approximations and viscous approximations of wave maps respectively. For general target manifolds $N$,  in a very important work \cite{FreireMullerStruwe1997} (cf. also \cite{FreireMullerStruwe1998}) Freire-M\"uller-Struwe successfully
established, in dimension $n=2$, the compensated-compactness property for weakly convergent sequences of wave maps by adapting
H\'elein's method of moving frames on $N$ and the duality
between Hardy and BMO spaces for harmonic maps. Utilizing
\cite{FreireMullerStruwe1997} and Zhou's method of viscous approximations, M\"uller-Struwe \cite{MullerStruwe1996} obtained the global existence
of weak wave maps into general manifolds when $n=2$.
It has remained an open problem that  whether there exists
global weak weak solutions of \eqref{wavemap1}-\eqref{cauchydata}
for general smooth target manifolds $N$, when $n\ge 3$. For interested readers, we refer to the book \cite{Shatah-Struwe} by Shatah and Struwe that contains many interesting development on geometric wave equations. 

In this article, we revisit the wave map equation \eqref{wavemap1} by investigating the elliptic regularization approach proposed by De Giorgi \cite{Giorgi1996} and Serra-Tilli \cite{Serra-Tilli2012, Serra-Tilli2016}.

Now, we describe the approach. For $\eps>0$, define a family of
energy functionals
\begin{align}
{E}_\eps(v)=\iint_{\mathbb R^n\times\R_+} \frac{e^{-t/\eps}}{\eps}
\Big\{\eps|\partial_t^2v|^2+\frac{1}{\eps}|\nabla v|^2\Big\}\,dxdt,
\end{align}
for maps $v:\mathbb{R}^n\times (0,\infty)\to N$ subject to the boundary condition:
\begin{align}
v(x,0)=\phi(x), \ \ v_t(x,0)=\psi(x), 
\ \forall x\in\mathbb{R}^n,
\end{align}
where $\phi:\mathbb{R}^n\to N$
and $\psi:\mathbb{R}^n\to \mathbb{R}^L$,
with $\psi(x)\in T_{\phi(x)}N$ for $x\in\mathbb{R}^n$, are given data.

By calculating the first order variation, any minimizer $v_\eps:
\R^n\times (0,\infty)\to N$ for ${E}_\eps$, provided it exists in suitable configuration function spaces,  
solves the following Euler-Lagrange equation
\begin{align}\label{WED1}
\eps^2 \partial_{t}^4v_\eps-2\eps \partial_t^3v_\eps+
\partial_t^2v_\epsilon-\Delta v_\eps
\perp T_{v_\eps} N.
\end{align}
Heuristically, as $\eps\to 0$, a subsequence of solutions $v_\eps$ of \eqref{WED1} would converge to a map $v:\R^n\times (0,\infty)\to N$, that satisfies the wave map equation \eqref{wavemap1}. On the other hand, how to justify the validity of such a heuristics turns out to be a very challenging problem, thanks to the supercritical nonlinearities appearing within the corresponding analytic form of the equation \eqref{WED1}.

Neverthless, when $N=\mathbb{S}^{L-1}\hookrightarrow\mathbb R^L$ or 
the special orthogonal group $SO(m)\hookrightarrow\R^L$ 
(with $L={m^2}$),
we are able to prove the following theorem.

\begin{theorem}  \label{main} Assume $N=\mathbb{S}^{L-1}$ or
$SO(m)$. Let $\phi\in \dot{H}^1(\mathbb R^n, N)$
and $\psi\in (L^\infty\cap {H}^1)(\mathbb R^n, \phi^*TN)$. For $\eps>0$, let
$v_\eps:\R^n\times (0,\infty)\to N$
be a minimizer of the energy
${E}_\eps$ over the admissible space $\mathcal{H}$ together with boundary data $(\phi,\psi)$. Then there exists a subsequence $\eps_i\to 0$ such that $v_i=v_{\eps_i}$ weakly converges to 
a map $v:\mathbb{R}^n\times \R_+\to 
N$ in $H^1(\R^n\times \mathbb{R}_+)$ so that
$$
(\partial_tv, \nabla v)\in L^\infty(\mathbb{R}_+, L^2(\mathbb R^n)).
$$
And $v$ is a weak solution of the wave map equation:
\begin{align}
\partial_t^2v-\Delta v \perp T_v N \ 
\ {\rm{in}}\ \ \mathbb{R}^n\times (0,\infty),   
\end{align}
along with the initial condition
\begin{align}
v(x,0)=\phi(x),\  \partial_tv(x,0)=\psi(x),
\ \ \forall x\in\mathbb{R}^n.
\end{align}
Furthermore, $v$ satisfies the following energy inequality:
\begin{align}
E(v(t))=\int_{\mathbb{R}^n}
(|\partial_tv|^2+|\nabla v|^2)(x,t)\,dx
\le E(v(0))=\int_{\mathbb{R}^n}
(|\psi|^2+|\nabla \phi|^2)(x)\,dx.
\end{align}
\end{theorem}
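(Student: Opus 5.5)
We follow the Serra--Tilli strategy: derive $\eps$-uniform energy bounds from the minimality of $v_\eps$, pass to a weak limit, and use the symmetry of the target to write the constraint $\perp T_vN$ as a system of conservation laws that is stable under weak convergence.

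\textbf{Step 1: a priori estimates.} Substituting $t=\eps s$, the functional becomes
\[
E_\eps(v)=\int_0^\infty e^{-s}\int_{\R^n}\Big(\eps^{-3}|\partial_s^2 w|^2+\eps^{-1}|\nabla w|^2\Big)\,dx\,ds,\qquad w(x,s)=v(x,\eps s).
\]
We first bound the minimum value from above by a competitor. We take $w(x,t)=\Pi_N(\phi(x)+t\psi(x)\chi(t))$, with $\chi$ a cutoff and $\Pi_N$ the nearest point projection, suitably modified where $|t\psi|$ is not small. Since $\psi\in L^\infty$, this gives $\eps E_\eps(w)\le C_0:=\int(|\psi|^2+|\nabla\phi|^2)+o(1)$. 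Precisely, $E_\eps(w)\le \frac{1}{\eps}\big(\int|\nabla\phi|^2+O(\eps)\big)$, and we should keep track of the exact constant.

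Next, following Serra--Tilli, we introduce the averaged energies
\[
\mathcal E_\eps(t)=\frac{1}{2}\int_{\R^n}\big(|\partial_tv_\eps|^2+|\nabla v_\eps|^2\big)(x,t)\,dx+(\text{correction terms in } \eps\partial_t^2v_\eps),
\]
together with $L_\eps(t)=\int_t^\infty \frac{e^{-(s-t)/\eps}}{\eps}\big(\eps|\partial_t^2v_\eps|^2+\eps^{-1}|\nabla v_\eps|^2\big)\,ds$ (integrated in $x$). Inner variations in time, $v_\eps(x,t)\mapsto v_\eps(x,t+\delta\eta(t))$, stay in $N$ and hence are admissible. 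They yield a differential identity for $L_\eps$, and from it the bound $\int(|\partial_tv_\eps|^2+|\nabla v_\eps|^2)(t)\,dx\le E(v(0))+o(1)$ uniformly in $t$, plus $\eps^{3}\int\!\!\int|\partial_t^2v_\eps|^2\to0$ on compacts. This is the main obstacle. The target-valued constraint rules out the linear truncation arguments, so only reparametrizations in $t$ and the minimality comparison above are available. We will reproduce the Serra--Tilli computation, observing that it uses only time reparametrizations and therefore applies verbatim to $N$-valued maps.

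\textbf{Step 2: compactness and initial data.} The uniform bounds give a subsequence with $v_i\rightharpoonup v$ weakly in $H^1_{loc}$ and weak-$*$ in $L^\infty(\R_+,\dot H^1)$ for the gradients. Rellich gives strong convergence in $L^2_{loc}$, hence $v\in N$ a.e. The energy inequality follows from weak lower semicontinuity applied to the time-averaged bound. The boundary data $v(0)=\phi$ is preserved by trace continuity. For $\partial_tv(0)=\psi$, we read it off from the weak formulation in Step 3 with test functions not vanishing at $t=0$. Integrating the Euler--Lagrange equation by parts produces boundary terms $\int\psi\cdot\varphi(x,0)$, up to $O(\eps)$ terms involving $\eps\partial_t^2v$ at $t=0$, which vanish in the limit by the estimate of Step 1.

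\textbf{Step 3: passage to the limit in the equation.} For $N=\mathbb S^{L-1}$, equation (\ref{WED1}) is equivalent to
\[
\big(\eps^2\partial_t^4v-2\eps\partial_t^3v+\partial_t^2v-\Delta v\big)\wedge v=0 .
\]
This can be rewritten in divergence form as
\[
\partial_t\big(\partial_tv\wedge v-2\eps\,\partial_t^2v\wedge v+\eps\,\partial_tv\wedge\partial_tv+\eps^2(\partial_t^3v\wedge v-\partial_t^2v\wedge\partial_tv)\big)-\operatorname{div}(\nabla v\wedge v)=0,
\]
where the cross terms $\partial_tv\wedge\partial_tv$ vanish. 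This is the Shatah/Freire conservation-law structure. For $SO(m)$ we use instead $v^{-1}(\cdots)$ being symmetric, i.e.\ the skew part of $v^T(\cdots)$ vanishes, which gives the analogous form with $v^T\partial v-(\partial v)^Tv$.

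Testing against $\varphi\in C_c^\infty$, the terms $\partial_tv\wedge v$ and $\nabla v\wedge v$ pass to the limit as weak-times-strong products. The $\eps$-terms are handled by integrating by parts onto $\varphi$. For example, $\eps\int\partial_t^2v\wedge v\,\partial_t\varphi=-\eps\int\partial_tv\wedge(v\partial_t^2\varphi)+\ldots$, which is $O(\eps)$. Similarly, $\eps^2\partial_t^3v\wedge v$ reduces, after two integrations by parts, to terms bounded by $\eps^2\|\partial_t^2v\|_{L^2}\|\partial_tv\|_{L^2}$ plus $O(\eps^2)$, and this tends to $0$ by Step 1. The limit is therefore $\partial_t(\partial_tv\wedge v)-\operatorname{div}(\nabla v\wedge v)=0$. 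Together with $|v|=1$, this is equivalent to the weak wave map equation.
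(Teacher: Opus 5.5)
\textbf{There is a genuine gap.} Nothing in your plan controls $\partial_t^2v_\eps$ at, or uniformly near, $t=0$. Both the sharp energy inequality and the initial velocity depend on exactly this.

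\textbf{The energy inequality.} Your claim that the Serra--Tilli computation ``uses only time reparametrizations'' is not correct. In the rescaled variables $u_\eps(x,t)=v_\eps(x,\eps t)$, the inner variation gives $E_\eps'(t)=-2D_\eps(t)$ for $E_\eps(t)=I_\eps(t)-I_\eps'(t)+\frac12e^tH_\eps(t)$. This identity only propagates whatever bound you already have on $E_\eps(0)$. The competitor gives $\frac12H_\eps(0)\le\frac12\eps^2\int|\nabla\phi|^2+C\eps^3$, and the data give $I_\eps(0)=\frac12\eps^2\int|\psi|^2$. But $E_\eps(0)$ also contains $I_\eps'(0)=\int\langle\partial_tu_\eps,\partial_t^2u_\eps\rangle(x,0)\,dx$, a trace of $\partial_t^2u_\eps$. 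Energy arguments alone give only $|I_\eps'(0)|\le C\eps^2$. That yields $E(v(t))\le C$ with a data-dependent constant, not $E(v(t))\le E(v(0))$.

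The paper gets $|I_\eps'(0)|\le C\eps^3$ (Lemma \ref{sharp_est0}) in three moves:
\begin{enumerate}
\item It writes $I_\eps'=\int\langle\partial_tu_\eps\times u_\eps,\partial_t^2u_\eps\times u_\eps\rangle$, using $\partial_tu_\eps\perp u_\eps$.
\item It freezes $(\partial_tu_\eps\times u_\eps)(\cdot,0)=\eps\,\psi\times\phi$.
\item It applies the dual estimate \eqref{dual_est} with $h=\psi\times\phi$.
\end{enumerate}
That estimate (Lemma \ref{dualestimate}) comes from the \emph{outer} Euler--Lagrange equation, not from time reparametrizations. It is tested with $\eta_\delta(t)h(x)$, where $\eta_\delta$ is the $C^{1,1}$ ramp vanishing for $t\le T$ with $\eta_\delta''=\frac2\delta\chi_{(T,T+\delta)}$. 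This isolates the tangential part $\partial_t^2u_\eps\times u_\eps$ at time $T$, with a bound $O(\eps^2)$ that stays uniform as $T\downarrow0$. This is also where the hypothesis $\psi\in H^1\cap L^\infty$ is genuinely needed, and your plan never uses it.

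\textbf{The initial velocity.} The same missing estimate breaks your Step 2 for $\partial_tv(\cdot,0)=\psi$. Test functions that do not vanish to first order at $t=0$ are not admissible variations, because the data fix both $v_\eps$ and $\partial_tv_\eps$ there. If you integrate by parts anyway, the boundary terms are the traces at $t=0$ of $\eps\,\partial_t^2v_\eps\wedge v_\eps$ and $\eps^2(\partial_t^3v_\eps\wedge v_\eps-\partial_t^2v_\eps\wedge\partial_tv_\eps)$. Step 1 does not control these at all: space-time $L^2$ bounds on $\partial_t^2v_\eps$ give no traces. Even the single pairing $\eps\int\langle\psi,\partial_t^2v_\eps\rangle$ at $t=0$, which is $I_\eps'(0)/\eps^2$, is only $O(1)$ from the energy identity, not $o(1)$.

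The paper instead uses the dual estimate to make $t\mapsto\int(\partial_tv_\eps\times v_\eps)\cdot h\,dx$ equi-Lipschitz on $[0,T]$. Its initial value $\int(\psi\times\phi)\cdot h$ then passes to the limit, and $\partial_tv(\cdot,0)-\psi\perp\phi$ finishes the argument.

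\textbf{A smaller error in the conservation form.} You have $-2\eps\,\partial_t^3v\wedge v=-2\eps\,\partial_t(\partial_t^2v\wedge v)+2\eps\,\partial_t^2v\wedge\partial_tv$, and the last term is not a time derivative. The term $\eps\,\partial_tv\wedge\partial_tv$ you add is identically zero and absorbs nothing. The leftover term must be estimated directly by $\eps\|\partial_t^2v_\eps\|_{L^2}\|\partial_tv_\eps\|_{L^2(\mathrm{supp}\,\varphi)}\le C\eps^{1/2}$. This needs $\iint|\partial_t^2v_\eps|^2\le C\eps^{-1}$, which is available from $\int_0^\infty D_\eps=\frac12E_\eps(0)$. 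It is much stronger than the $\eps^3\iint|\partial_t^2v_\eps|^2\to0$ you state.
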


We would like to point out that, due to the variational approach
of specific forms, the condition on $\psi$ is stronger than the standard assumption $\psi\in L^2(\R^n)$ for wave maps.
We will present the proof in detail for $N=\mathbb{S}^{L-1}$ in Section 2, and point out the necessary changes needed for
the proof of $N=SO(m)$ in Section 3. 

\bigskip

\section{Proof of Theorem \ref{main} for $N=\mathbb{S}^{L-1}$}

In this section, we will establish uniform energy estimates
of $v_\eps$ and then provide a proof of Theorem \ref{main} for
$N=\mathbb{S}^{L-1}.$

First, by defining $u(x,t)=v(x, \eps t)$, one can check that
$E_\eps(v)=\eps^{-3}I_\eps(u)$, where
$I_\eps$ is defined by
\begin{equation}
I_\eps(u)=\iint_{\mathbb R^n\times\R_+} e^{-t}
\Big\{|\partial_t^2 u|^2+{\eps^2}|\nabla u|^2\Big\}\,dxdt,
\end{equation}
and the boundary condition for $u$ becomes
\begin{equation}\label{bdrycond}
u(x,0)=\phi(x), 
\ \ \partial_tu(x,0)=\eps \psi(x), \ x\in\mathbb{R}^n.
\end{equation}
\subsection{Existence of minimizer for $I_\eps$} 
Define the function space
\begin{align*}
\mathcal{H}&=
\Big\{u:\ \mathbb R^n\times \R_+\to \mathbb{S}^{L-1}: (\partial_t^2u,\nabla u)\in L^2_{\rm{loc}}(\mathbb R^n\times \R_+)\ \ {\rm{and}} \\
&\qquad\big[u\big]_{\mathcal{H}}^2
=\iint_{\mathbb{R}^n\times\R_+}
e^{-t}\big(|\partial_t^2u|^2+|\nabla u|^2)(x,t)\,dxdt<\infty
\Big\}.
\end{align*}

Since $|u|=1$ in $\R^n\times (0,\infty)$ for any
$u\in \mathcal{H}$, it is not difficult to verify that for any 
$0<T<\infty$ and compact set $K\subset\mathbb{R}^n$,
$$
\mathcal{H}\hookrightarrow H^2([0,T], L^2(K)),
$$
so that 
$$
(u, \partial_tu)\in C([0, T], L^2(K)), \ \forall u\in \mathcal{H}.
$$
Hence for each $u\in \mathcal{H}$, the traces $u(\cdot, 0)$
and $\partial_t u(\cdot, 0)$ are  well defined as elements of $L^2(K)$, see also \cite[Remark 2.1]{Serra-Tilli2012}.

\smallskip
Given $\phi\in \dot{H}^1(\mathbb R^n,\mathbb S^{L-1})$ 
and $\psi\in ({H}^1\cap L^\infty)(\mathbb R^n,\mathbb \phi^*T\mathbb{S}^{L-1})$, we define
\[
\mathcal{H}_{\phi,\eps\psi}
=\Big\{u\in \mathcal{H}: \ u(x,0)=\phi(x),
\ \partial_t u(x,0)=\eps\psi(x)\ {\rm{for}}\ x\in \R^n\Big\}.
\]
It follows from Lemma \ref{nonempty} that
$\mathcal{H}_{\phi,\eps\psi}\not=\emptyset$.
Now we can show the 
existence of minimizer of $I_\eps$ over $\mathcal{H}_{\phi,\eps\psi}$.
\begin{lemma} \label{existenemin} Assume $\phi\in \dot{H}^1(\mathbb R^n,\mathbb S^{L-1})$ 
and $\psi\in ({H}^1\cap L^\infty)(\mathbb R^n,\mathbb \phi^*T\mathbb{S}^{L-1})$. Then there exists a map $u_\eps\in\mathcal{H}_{\phi,\eps\psi}$ such that
\begin{align}\label{minimizer}
I_\eps(u_\eps)\le I_\eps(v), \ \forall v\in\mathcal{H}_{\phi,\eps\psi}. \end{align}
\end{lemma}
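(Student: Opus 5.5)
This follows the direct method of the calculus of variations. By Lemma \ref{nonempty}, $\mathcal{H}_{\phi,\eps\psi}\neq\emptyset$, so
$$m:=\inf_{v\in\mathcal{H}_{\phi,\eps\psi}} I_\eps(v)<\infty,$$
and we may pick a minimizing sequence $\{u_k\}\subset \mathcal{H}_{\phi,\eps\psi}$ with $I_\eps(u_k)\to m$. Since $\eps>0$ is fixed, $I_\eps(u_k)\ge \min\{1,\eps^2\}[u_k]_{\mathcal{H}}^2$. Hence $\sup_k [u_k]_{\mathcal H}<\infty$: the sequences $e^{-t/2}\partial_t^2 u_k$ and $e^{-t/2}\nabla u_k$ are bounded in $L^2(\R^n\times\R_+)$. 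The constraint $|u_k|=1$ gives a uniform $L^\infty$ bound on $u_k$.

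\textbf{Step 1: uniform local bound on $\partial_t u_k$.} Fix $T>0$ and a compact $K\subset\R^n$. For $t\in[0,T]$ write
$$\partial_t u_k(x,t)=\eps\psi(x)+\int_0^t \partial_t^2u_k(x,s)\,ds.$$
Using $\psi\in L^2_{\rm loc}$, Cauchy--Schwarz and $e^{-s}\ge e^{-T}$ on $[0,T]$, this gives
$$\|\partial_t u_k\|_{L^2(K\times[0,T])}\le C(T,K,\eps,\psi)\big(1+[u_k]_{\mathcal H}\big).$$
Thus $u_k$ is bounded in $H^1(K\times(0,T))$, and $\partial_t^2 u_k$ is bounded in $L^2(K\times(0,T))$, i.e.\ $\{u_k\}$ is bounded in $H^2([0,T],L^2(K))$. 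This quantifies the embedding $\mathcal H\hookrightarrow H^2([0,T],L^2(K))$ noted before the lemma.

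\textbf{Step 2: extraction of a limit.} By a diagonal argument over $T\to\infty$ and $K\uparrow\R^n$, we extract a subsequence (not relabeled) and a map $u$ with:
\begin{itemize}
\item $u_k\rightharpoonup u$ weakly in $H^1_{\rm loc}$;
\item $\partial_t^2u_k\rightharpoonup \partial_t^2 u$ and $\nabla u_k\rightharpoonup\nabla u$ weakly in $L^2_{\rm loc}$;
\item by Rellich, $u_k\to u$ strongly in $L^2_{\rm loc}$ and a.e.
\end{itemize}
The a.e.\ convergence gives $|u|=1$ a.e., so $u$ takes values in $\mathbb S^{L-1}$. Weak lower semicontinuity of the weighted $L^2$ norms follows by restricting to $K\times(0,T)$, using lower semicontinuity there, and letting $K,T$ exhaust $\R^n\times\R_+$. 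This yields
$$I_\eps(u)\le\liminf_k I_\eps(u_k)=m.$$
In particular $[u]_{\mathcal H}<\infty$, so $u\in\mathcal H$.

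\textbf{Step 3: preservation of the boundary conditions (main obstacle).} The nonconvex constraint is harmless, since it passes to the limit via a.e.\ convergence. The delicate point is the trace conditions, since we only have weak convergence of second time derivatives. Here we use that the trace map
$$w\mapsto (w(\cdot,0),\partial_t w(\cdot,0)),\qquad H^2([0,T],L^2(K))\to L^2(K)\times L^2(K),$$
is bounded and linear, hence weakly continuous. Since $u_k\rightharpoonup u$ weakly in $H^2([0,T],L^2(K))$ by Step 1, we get
$$(u_k(\cdot,0),\partial_tu_k(\cdot,0))\rightharpoonup (u(\cdot,0),\partial_tu(\cdot,0))\quad\text{weakly in } L^2(K).$$
The left-hand side is constantly $(\phi,\eps\psi)$, so $u(\cdot,0)=\phi$ and $\partial_tu(\cdot,0)=\eps\psi$ a.e.\ on $K$, for every compact $K$.

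An alternative for the second trace is to test against $\eta(x)$. From Step 1,
$$\int_K \partial_t u_k(x,t)\eta(x)\,dx-\int_K\eps\psi\,\eta\,dx=O\big(\sqrt t\,\big)$$
uniformly in $k$, and passing to the limit gives the same identification.

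Thus $u\in\mathcal H_{\phi,\eps\psi}$ and $I_\eps(u)\le m$. Since $m$ is the infimum, $I_\eps(u)=m$, and $u_\eps:=u$ satisfies \eqref{minimizer}.
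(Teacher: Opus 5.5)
Your proposal is correct and follows essentially the same route as the paper. Both use the direct method: a minimizing sequence, weak compactness of $(\partial_t^2 u_k,\nabla u_k)$, strong local and a.e.\ convergence to keep $|u|=1$, preservation of the traces $(\phi,\eps\psi)$, and weak lower semicontinuity of $I_\eps$. You also fill in details the paper leaves implicit, namely the uniform bound of $u_k$ in $H^2([0,T],L^2(K))$ and the weak continuity of the trace map.
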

\begin{proof} First, it follows from Lemma \ref{nonempty} below that
\[
\alpha:=\inf\Big\{I_\eps(v): 
\ v\in\mathcal{H}_{\phi,\eps\psi}\Big\}
\le \eps^2\int_{\mathbb R^n}|\nabla\phi|^2+C\eps^3<\infty,
\]
where $C>0$ depends on 
$\|\nabla\phi\|_{L^2(\mathbb{R}^n)}$,
$\|\nabla\psi\|_{L^2(\mathbb{R}^n)}$,
and $\|\psi\|_{L^4(\mathbb{R}^n)}$.

Now let $\{u_i\}\subset\mathcal{H}_{\phi,\eps\psi}$ be a minimizing sequence of $\alpha$, i.e.,
$$
I_\eps(u_i)\rightarrow \alpha.
$$
By the weak compactness of $\mathcal{H}_{\phi,\eps\psi}$, there exists a map $u_\eps:\R^n\times\mathbb R_+\to\mathbb R^L$,
with $\big[u\big]_{\mathcal{H}}<\infty$,
such that, after passing to a subsequence if necessary,
$$
(\partial_t^2u_i,\nabla u_i)\rightharpoonup
(\partial_t^2u_\eps,\nabla u_\eps) 
\ \ {\rm{in}}\ \ L^2(\mathbb R^n),
$$
and
$$
(u_i,\partial_tu_i)\rightarrow (u_\eps, \partial_tu_\eps)
\ \ {\rm{in}}\ \ L^2_{\rm{loc}}(\mathbb{R}^n\times \R_+)\ \ {\rm{and\ a.e.\ in }}
\ \ \R^n\times\mathbb R_+.
$$
In particular, $|u_\eps|=1$ a.e. in 
\ $\R^n\times\mathbb R_+$, and $(u_\eps,\partial_t u_\eps)\big|_{t=0}=(\phi,\eps\psi)$, that is, $u_\eps\in \mathcal{H}_{\phi,\eps\psi}$. 
Hence 
\[
I_\eps(u_\eps)\ge\alpha.
\]
On the other hand, by the lower semi continuity of $I_\eps$
on $\mathcal{H}$, we have
\[
I_\eps(u_\eps)\le \liminf_{i}I_\eps(u_i)=\alpha.
\]
Thus $I_\eps(u_\eps)=\alpha$. This
completes the proof.
\end{proof}

\subsection{Uniform Energy Estimates}
\begin{lemma} \label{nonempty} Assume $\phi\in \dot{H}^1(\mathbb R^n,\mathbb S^{L-1})$ 
and $\psi\in ({H}^1\cap L^\infty)(\mathbb R^n,\mathbb \phi^*T\mathbb{S}^{L-1})$.
Then 
\begin{align}\label{energy_ineq1}
\alpha:=\inf\Big\{I_\eps(u):\  u\in \mathcal{H}_{\phi,\eps\psi}\Big\}
\le \eps^2\int_{\mathbb{R}^n}
|\nabla\phi|^2+C\eps^3.  
\end{align}
\end{lemma}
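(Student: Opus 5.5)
We construct an explicit competitor in $\mathcal{H}_{\phi,\eps\psi}$ and evaluate $I_\eps$ on it. The natural choice is to follow the geodesic on $\mathbb{S}^{L-1}$ issuing from $\phi(x)$ with initial velocity $\eps\psi(x)$, cut off in time so that the time derivatives stay bounded. Fix a smooth cutoff $\eta\in C^\infty([0,\infty))$ with $\eta(t)=t$ for $t\in[0,1]$, $\eta$ constant for $t\ge 2$, and $0\le\eta'\le 1$. Then set
\begin{align*}
w(x,t)=\cos\big(\eps\eta(t)|\psi(x)|\big)\phi(x)+\sin\big(\eps\eta(t)|\psi(x)|\big)\frac{\psi(x)}{|\psi(x)|},
\end{align*}
with $w=\phi$ where $\psi=0$. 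Since $\psi\perp\phi$ and $|\phi|=1$, we have $|w|=1$. Moreover $w(\cdot,0)=\phi$ and $\partial_tw(\cdot,0)=\eps\psi$, so $w$ satisfies the boundary conditions.

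To avoid the singular factor $\psi/|\psi|$, rewrite $w=\cos(\eps\eta|\psi|)\phi+\eps\eta\,\mathrm{sinc}(\eps\eta|\psi|)\psi$, where $\mathrm{sinc}(s)=\sin s/s$. The functions $s\mapsto\cos s$ and $s\mapsto\mathrm{sinc}(s)$ are smooth and even, hence smooth functions of $s^2=\eps^2\eta^2|\psi|^2$. This makes the spatial gradient computation legitimate.

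The estimates proceed in two steps.

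First, the time derivatives. Each $\partial_t$ brings out a factor $\eps\eta'|\psi|$, or $\eps\eta''|\psi|$ supported in $[1,2]$. Hence
\begin{align*}
|\partial_t^2w|\le C\big(\eps^2|\psi|^2+\eps|\psi|\,|\eta''|\big).
\end{align*}
With $\psi\in L^\infty\cap L^2$ (so $\psi\in L^4$),
\begin{align*}
\iint e^{-t}|\partial_t^2w|^2\le C\eps^4\|\psi\|_{L^4}^4+C\eps^2\|\psi\|_{L^2}^2.
\end{align*}
The second term is only $O(\eps^2)$, not $O(\eps^3)$, so the cutoff term is the main obstacle. To fix it, let the cutoff act on a long time scale: use $\eta_\eps(t)=\eps^{-1}\eta(\eps t)$. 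Then $\eta_\eps''=O(\eps)$ and is supported in $[\eps^{-1},2\eps^{-1}]$, where $e^{-t}$ is exponentially small. Alternatively, simply take $\eta(t)=t$ with no cutoff. Then $\partial_t^2w=-\eps^2|\psi|^2w$ exactly, since $w$ is a constant-speed great circle in $t$, and we get
\begin{align*}
\iint e^{-t}|\partial_t^2w|^2=\eps^4\|\psi\|_{L^4}^4.
\end{align*}
I will adopt this uncut choice. The $L^\infty$ bound on $\psi$ is still used below, in the gradient estimate.

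Second, the gradient. Differentiating in $x$ gives
\begin{align*}
|\nabla w|\le|\nabla\phi|+C\eps t|\nabla\psi|\big(1+\eps t\|\psi\|_\infty\big)+C\eps t|\psi||\nabla\phi|\cdot(\text{bounded factor}).
\end{align*}
More carefully, write $\nabla w=\cos(\eps t|\psi|)\nabla\phi+R$ with $|R|\le C\eps t\big(|\nabla\psi|+|\psi||\nabla\phi|\big)$, where $C$ depends on nothing. Then
\begin{align*}
|\nabla w|^2\le|\nabla\phi|^2+2|\nabla\phi||R|+|R|^2.
\end{align*}
Multiplying by $\eps^2e^{-t}$ and integrating, using $\int_0^\infty e^{-t}t^k\,dt<\infty$, gives
\begin{align*}
\eps^2\iint e^{-t}|\nabla w|^2\le\eps^2\int|\nabla\phi|^2+C\eps^3\big(\|\nabla\phi\|_2\|\nabla\psi\|_2+\|\psi\|_\infty\|\nabla\phi\|_2^2\big)+C\eps^4(\cdots).
\end{align*}
The cross term $|\nabla\phi|^2|\psi|$ is why we need $\psi\in L^\infty$. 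Note that the paper's constant mentions $\|\psi\|_{L^4}$, so one might prefer to bound $\int|\psi||\nabla\phi|^2$ differently. However, using $\|\psi\|_\infty$ is acceptable under the stated hypotheses.

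Finally, $w$ lies in $\mathcal{H}$, since all the weighted integrals are finite. Combining the two steps gives
\begin{align*}
\alpha\le I_\eps(w)\le\eps^2\int|\nabla\phi|^2+C\eps^3\qquad\text{for }\eps\le 1,
\end{align*}
which is \eqref{energy_ineq1}. As a byproduct, this also shows $\mathcal{H}_{\phi,\eps\psi}\ne\emptyset$.
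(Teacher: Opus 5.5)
Your proof is correct. The overall strategy is the paper's: exhibit an explicit competitor in $\mathcal{H}_{\phi,\eps\psi}$ and expand its energy in $\eps$. The competitor, however, is different.

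The paper takes the radial projection of the straight line, $w_\eps=(\phi+t\eps\psi)/|\phi+t\eps\psi|$. This is globally defined because $|\phi+t\eps\psi|^2=1+t^2\eps^2|\psi|^2\ge 1$. That choice gives the gradient bound $|\nabla w_\eps|\le|\nabla\phi|+t\eps|\nabla\psi|$ for free, since $y\mapsto y/|y|$ is $1$-Lipschitz on $\{|y|\ge 1\}$, and it involves no singularity on $\{\psi=0\}$. The price is a less tidy second time derivative, which the paper bounds by $6\eps^2|\psi|^2$.

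Your great circle $\exp_{\phi(x)}(t\eps\psi(x))$ gives $\partial_t^2 w=-\eps^2|\psi|^2 w$ exactly. The cost is the $\mathrm{sinc}$ rewriting needed to differentiate through $\psi/|\psi|$. This is precisely the construction the paper itself uses in Section 3 for $SO(m)$, so yours is the version that adapts to targets with an explicit exponential map. The projection trick, by contrast, is special to the sphere.

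One inaccuracy, harmless for correctness: your bound on $R$ is weaker than your own formula gives. With $s=\eps t|\psi|$, the terms of $\nabla w$ other than $\cos(s)\nabla\phi$ are $-\sin(s)\,\phi\,\nabla s$, $\eps t\,\mathrm{sinc}(s)\nabla\psi$ and $\eps t\,\mathrm{sinc}'(s)\,\psi\,\nabla s$. Two facts control them:
\begin{itemize}
\item $|\nabla s|\le \eps t|\nabla\psi|$;
\item $|s\,\mathrm{sinc}'(s)|=|\cos s-\mathrm{sinc}(s)|\le 2$.
\end{itemize}
Together these give $|R|\le 4\eps t|\nabla\psi|$. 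So for the sphere no $|\psi||\nabla\phi|$ term arises at all. This differs from $SO(m)$, where $\nabla(\phi^{-1}\psi)$ genuinely produces such a term.

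Consequently, your remark that the cross term $\int|\psi||\nabla\phi|^2$ is ``why we need $\psi\in L^\infty$'' is spurious. In this lemma, in your argument as in the paper's, the $L^\infty$ hypothesis is only needed to place $\psi$ in $L^4$ for the $\partial_t^2$ contribution.
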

\begin{proof} Since 
$\phi(x)+t\eps \psi(x)$ satisfies
$$
|\phi(x)+t\eps \psi(x)|^2=
|\phi(x)|^2+t^2\eps^2|\psi(x)|^2
\ge 1,
$$
we can define the $\mathbb{S}^{L-1}$-valued map by
$$w_\eps(x)=\displaystyle\frac{\phi(x)+t\eps \psi(x)}
{|\phi(x)+t\eps \psi(x)|}, \ (x,t)\in\R^n\times\mathbb R_+.
$$
By the definition of $\alpha$, we have
$$
\alpha\le I_\eps(w_\eps).
$$
To estimate $I_\eps(w_\eps)$,
we first calculate
\begin{align*}
\nabla w_\eps(\cdot,t)&=
\frac{(\nabla\phi+t\eps\nabla\psi)
|\phi+t\eps\psi|^2-(\phi+t\eps\psi)\langle\phi+t\eps\psi,\nabla\phi+t\eps\nabla\psi\rangle}
{|\phi+t\eps\psi|^3}\\
&=\frac{1}{|\phi+t\eps\psi|}\Big(
(\nabla\phi+t\eps\nabla\psi)-
\langle \nabla\phi+t\eps\nabla\psi, \frac{\phi+t\eps\psi}{|\phi+t\eps\psi|}\rangle\frac{\phi+t\eps\psi}{|\phi+t\eps\psi|}\Big)
\end{align*}
so that
$$
|\nabla w_\eps|(x,t)
\le |\nabla\phi+t\eps\nabla\psi|
\le |\nabla\phi|+t\eps|\nabla\psi|.
$$
Also we calculate
$$
\partial_tw_\eps(\cdot,t)
=\frac{\eps\psi}{|\phi+t\eps\psi|}
-\frac{\eps (\phi+t\eps\psi)\langle \phi+t\eps\psi,\psi\rangle}{|\phi+t\eps\psi|^3}
$$
so that
\begin{align*}
 \partial_t^2w_\eps(\cdot, t)
 &=-\frac{2\eps^2\psi\langle \phi+t\eps\psi,\psi\rangle}{|\phi+t\eps\psi|^3}
 -\frac{\eps^2|\psi|^2(\phi+t\eps\psi)}
 {|\phi+t\eps\psi|^3}\\ &\ \ \ +3\frac{\eps^2(\phi+t\eps\psi)\langle \phi+t\eps\psi,\psi\rangle^2}
 {|\phi+t\eps\psi|^5}.
\end{align*}
Hence
\begin{align*}
|\partial_t^2w_\eps|(\cdot, t)
&\le 6\eps^2|\psi|^2
\end{align*}
This implies that 
\begin{align*}
I_\eps(w_\eps)
 &\le  \iint_{\mathbb{R}^n\times\R_+} e^{-t}(|\partial_t^2w_\eps|^2+\eps^2|\nabla w_\eps|^2)\,dxdt\\
 &\le \int_{\mathbb{R}^n}
(\eps^2|\nabla\phi|^2+C\eps^3|\nabla\phi||\nabla\psi|+C\eps^4 (|\nabla\psi|^2+|\psi|^4))\,dxdt\\
&\le\eps^2\int_{\mathbb{R}^n}|\nabla\phi|^2\,dx+O(\eps^3).
\end{align*}
Here we have used the fact that
\[
\|\psi\|_{L^4(\R^n)}\le \|\psi\|_{L^2(\R^n)}^\frac12
\|\psi\|_{L^\infty(\R^n)}^\frac12.
\]
This completes the proof. \end{proof} 

To proceed, we introduce a few notations now. 
For any minimizer $u_\eps$ of $I_\eps$ over $\mathcal{H}_{\phi,\eps\psi}$, define
$$
L_\epsilon(t)=\int_{\mathbb{R}^n}
(|\partial_t^2u_\eps|^2+\eps^2|\nabla u_\eps|^2)(x,t)\,dx,
$$
$$
H_\eps(t)=\int_t^\infty e^{-s}L_\eps(s)\,ds,
$$
$$
I_\epsilon(t)=\frac12 \int_{\mathbb{R}^n} |\partial_tu_\eps|^2(x,t)\,dx,
$$
$$
D_\eps(t)=\int_{\mathbb{R}^n} |\partial_t^2u_\eps|^2(x,t)\,dx,
$$
and
$$
E_\eps(t)=I_\eps(t)-I_\eps'(t) 
+\frac12 e^t H_\eps(t).
$$

\begin{lemma} \label{energyestimate}  Assume $\phi\in \dot{H}^1(\mathbb R^n,\mathbb S^{L-1})$ 
and $\psi\in ({H}^1\cap L^\infty)(\mathbb R^n,\mathbb \phi^*T\mathbb{S}^{L-1})$.
Let $u_\eps$ be a minimizer of $I_\eps$ in $\mathcal{H}_{\phi,\eps\psi}$. Then
$E_\eps(t)$ satisfies
\begin{equation}\label{energy_identity}
 E_\epsilon'(t)=-2D_\eps(t).    
\end{equation}
In particular, $E_\epsilon\in W^{1,1}_{\rm{loc}}(\mathbb{R}_+)$, and is monotonically non-increasing with respect to $t$.    
\end{lemma}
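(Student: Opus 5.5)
The plan is to derive \eqref{energy_identity} from an \emph{inner (domain) variation in the time variable}. Since $u_\eps$ is constrained to take values in $\mathbb{S}^{L-1}$, outer variations $u_\eps+\delta\varphi$ are not admissible. Reparametrizations of time, however, preserve both the constraint and the boundary data. Fix $\eta\in C_c^\infty((0,\infty))$ and, for $|\delta|$ small, set $\varphi_\delta(t)=t+\delta\eta(t)$, which is a diffeomorphism of $\R_+$ equal to the identity near $t=0$ and for large $t$. Define $v_\delta(x,t)=u_\eps(x,\varphi_\delta(t))$. Then $v_\delta\in\mathcal{H}_{\phi,\eps\psi}$, so by Lemma \ref{existenemin}, $\delta\mapsto I_\eps(v_\delta)$ has a minimum at $\delta=0$. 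Note that $\partial_t^2 v_\delta=(\partial_t^2u_\eps)(\varphi_\delta)\,\varphi_\delta'^2+(\partial_tu_\eps)(\varphi_\delta)\,\varphi_\delta''$ and $\nabla v_\delta=(\nabla u_\eps)(\varphi_\delta)$.

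Next I change variables $s=\varphi_\delta(t)$ in $I_\eps(v_\delta)$. This moves all the $\delta$-dependence onto the smooth factors: the weight $e^{-\varphi_\delta^{-1}(s)}$, the Jacobian $1/\varphi_\delta'$, the powers $\varphi_\delta'^4$ and $\varphi_\delta'^2\varphi_\delta''$ multiplying $|\partial_s^2u_\eps|^2$ and $\partial_su_\eps\cdot\partial_s^2u_\eps$, and the term $\varphi_\delta''^2$ multiplying $|\partial_su_\eps|^2$. Once the $\delta$-dependence sits only on these smooth coefficients, differentiating at $\delta=0$ is legitimate, because $u_\eps\in\mathcal{H}$ gives $L_\eps\in L^1_{\rm loc}$ and $(u_\eps,\partial_tu_\eps)\in C([0,T],L^2_{\rm loc})$. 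Using $\partial_t I_\eps(t)=\int\partial_tu_\eps\cdot\partial_t^2u_\eps$, I expect to obtain
\[
\int_0^\infty e^{-s}\Big(\eta L_\eps+\eta'(4D_\eps-L_\eps)+2\eta'' I_\eps'\Big)\,ds=0\quad\forall\,\eta\in C_c^\infty((0,\infty)).
\]
In distributional form this reads $e^{-s}L_\eps+\big(e^{-s}(L_\eps-4D_\eps)\big)'+2\big(e^{-s}I_\eps'\big)''=0$.

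Since $e^{-s}L_\eps=-H_\eps'$, one integration gives
\[
H_\eps=e^{-s}(L_\eps-4D_\eps)+2\big(e^{-s}I_\eps'\big)'+c
\]
for some constant $c$. The equation shows in particular that $(e^{-s}I_\eps')'\in L^1_{\rm loc}$, which is the source of the claimed $W^{1,1}_{\rm loc}$ regularity of $E_\eps$. Multiplying by $\tfrac12e^{s}$ gives $\tfrac12e^sH_\eps=\tfrac12L_\eps-2D_\eps+I_\eps''-I_\eps'+\tfrac c2e^s$. Substituting this into $E_\eps'=I_\eps'-I_\eps''+\tfrac12(e^sH_\eps)'=I_\eps'-I_\eps''+\tfrac12e^sH_\eps-\tfrac12L_\eps$ yields
\[
E_\eps'=-2D_\eps+\tfrac c2 e^s .
\]

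It remains to show $c=0$, and this is the step I expect to be the main obstacle. I would integrate the identity for $H_\eps$ against $e^{-s}$, or average it over $[T,2T]$. The quantities $H_\eps$, $e^{-s}L_\eps$, $e^{-s}D_\eps$ and $e^{-s}I_\eps'$ are integrable on $\R_+$: for the last one, $|I_\eps'|\le (2I_\eps D_\eps)^{1/2}$, and $\int e^{-s}I_\eps<\infty$ follows from $|u_\eps|=1$ together with interpolation between $u_\eps$ and $\partial_t^2u_\eps$ on unit time intervals. Consequently the term $2(e^{-s}I_\eps')'$ integrates to boundary contributions that vanish along a suitable sequence $T\to\infty$. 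A nonzero constant $c$ would then contradict $\int_T^{2T}H_\eps\,ds\to0$. With $c=0$ we obtain $E_\eps'=-2D_\eps\le0$ in $\mathcal{D}'(0,\infty)$ with $D_\eps\in L^1_{\rm loc}$, hence $E_\eps\in W^{1,1}_{\rm loc}(\R_+)$ and $E_\eps$ is monotonically non-increasing.
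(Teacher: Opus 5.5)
Your computation is correct. With $\varphi_\delta(t)=t+\delta\eta(t)$ you recover the paper's first-variation identity with $g$ replaced by $-\eta$, and the algebra leading to $E_\eps'=-2D_\eps+\tfrac{c}{2}e^t$ checks out. The difference from the paper is the class of time reparametrizations you allow.

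The paper takes $\alpha_\delta(t)=t-\delta g(t)$ with $g(t)=\int_0^t\eta(s)\,ds$ and $\eta\in C_0^\infty(\R_+)$. So $g$ vanishes near $t=0$ but equals the constant $\int\eta$ for large $t$, which makes the variation a pure time translation at infinity. It is still admissible, since in the tail the weighted energy is just multiplied by $e^{-\delta\int\eta}$. The integration by parts $-\int_0^\infty e^{-t}gL_\eps\,dt=-\int_0^\infty H_\eps g'\,dt$ uses only $g(0)=0$ and $H_\eps(\infty)=0$. It then gives $L_\eps-e^tH_\eps-4D_\eps-2I_\eps'+2I_\eps''=0$ directly, with no free constant.

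Your compactly supported variations determine the equation only up to a constant, so you need an extra asymptotic argument as $t\to\infty$ to get $c=0$. The paper's test class builds that condition at infinity in and needs no growth information on $I_\eps$. Your route uses only local variations, but pays for it with the tail argument.

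That tail argument works, but your justification of $\int_0^\infty e^{-s}I_\eps\,ds<\infty$ does not. The map $u_\eps(\cdot,t)$ is $\mathbb{S}^{L-1}$-valued on all of $\R^n$, so it is not in $L^2(\R^n)$. Interpolating between $u_\eps$ and $\partial_t^2u_\eps$ on unit time intervals therefore leaves the divergent term $\int_{\R^n}\int_k^{k+1}|u_\eps|^2$. You must use the initial datum instead:
\[
\|\partial_tu_\eps(t)\|_{L^2(\R^n)}\le\eps\|\psi\|_{L^2}+\int_0^tD_\eps^{1/2}.
\]
Combine this with the weighted Hardy inequality $\int_0^\infty e^{-t}\big(\int_0^tf\big)^2dt\le4\int_0^\infty e^{-t}f^2dt$, which follows from one integration by parts and Cauchy--Schwarz. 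The result is $\int_0^\infty e^{-t}I_\eps\,dt\le C\big(\eps^2\|\psi\|_{L^2}^2+\int_0^\infty e^{-t}D_\eps\,dt\big)$, the same bound the paper uses in the dual estimate. This observation, not $C([0,T],L^2_{\rm loc})$, is also what puts $\partial_tu_\eps(\cdot,t)$ in $L^2(\R^n)$ and makes the $(\varphi_\delta'')^2|\partial_su_\eps|^2$ term finite.

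Also, $\int_T^{2T}H_\eps\,ds\to0$ is not available a priori. Test the identity for $H_\eps$ against $T^{-1}\chi(s/T)$ instead, with $\chi\in C_c^\infty((1,2))$ and $\int\chi=1$:
\begin{itemize}
\item the $H_\eps$ term is at most $\|\chi\|_\infty T^{-1}\int_T^{2T}H_\eps\le\|\chi\|_\infty H_\eps(T)\to0$;
\item the $e^{-s}(L_\eps-4D_\eps)$ term is $O(T^{-1})$;
\item the $2(e^{-s}I_\eps')'$ term is $O(T^{-2})$;
\item the constant contributes exactly $c$.
\end{itemize}
Hence $c=0$.
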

\begin{proof} The idea of proof is based on the inner variation of $u_\eps$ along the $t$-direction.
For the detail, we follow closely the argument by \cite[Proposition 3.1]{Serra-Tilli2012}.
For any $\eta\in C_0^\infty(\mathbb{R}_+)$, define
$$
g(t)=\int_0^t\eta(s)\,ds.
$$
Then $g\in C^\infty(\mathbb{R}_+)$ vanishes near $t=0$. 

Write $u$ for $u_\eps$. For a sufficiently small $\delta\in\mathbb{R}$, define
$$\alpha_\delta(t)=t-\delta g(t) 
\ \ {\rm{and}}\ \ U_\delta(x,t )=u(x, \alpha_\delta(t)), 
\ (x,t)\in\R^n\times\mathbb{R}_+.$$
Then $U_\delta$ satisfies the boundary condition (2.2), and $u\equiv U_0$ so that by the minimality, we have 
$$
I_\eps(u)\le I_\eps(U_\delta), 
$$
and hence
$$
\frac{d}{d\delta}\big|_{\delta=0}I_\eps
(U_\delta)=0.
$$
Since $\alpha_\delta$ is invertible when
$\delta$ is sufficiently small, we  denote $\beta_\delta$ as the inverse so that $\beta_\delta\circ\alpha_\delta(t)=t$.
By change of variables, we can rewrite
$I_\eps(U_\delta)$ as
\begin{align*}
I_\eps(U_\delta)
&=\iint_{\mathbb{R}^n\times\R_+}
(e^{-\beta_\delta(s)}\beta_\delta'(s))
\big(|\partial_t^2u(\alpha_\delta')^2+\partial_tu\alpha_\delta''|^2+\eps^2 |\nabla u|^2(x,s)\big)\,dxds.
\end{align*}
Direct calculations yield
\[
\frac{d}{d\delta}\big|_{\delta=0}\big(e^{-\beta_\delta}\beta_\delta'\big)
=e^{-t}(g'(t)-g(t)),
\]
and
\[
\frac{d}{d\delta}\big|_{\delta=0}\alpha'_\delta(t)=g'(t),
\ \ \frac{d}{d\delta}\big|_{\delta=0}\alpha''_\delta(t)=g''(t).
\]
Using these identities, we can deduce
\begin{align*}
0&=\frac{d}{d\delta}\big|_{\delta=0}I_\eps
(U_\delta)\\
&=\iint_{\mathbb{R}^n\times\R_+}
e^{-t}(g'-g)(|\partial_t^2u|^2+\eps^2|\nabla u|^2)(t,x)\,dxdt\\
&\ \ -\iint_{\mathbb{R}^n\times\R_+}
e^{-t}(4|\partial_t^2u|^2g'+2\langle \partial_t^2u, \partial_t u\rangle g'')(t,x)\,dxdt\\
&=\int_0^\infty
e^{-t}(g'(t)-g(t))L_\eps(t)\,dt-\int_0^\infty
e^{-t}(4D_\eps(t) g'(t)+2 I_\eps'(t) g''(t))\,dt.
\end{align*}
Observe that
\begin{align*}
-\int_0^\infty e^{-t} g(t)L_\epsilon(t)\,dt 
&=\int_0^\infty g(t)H_\eps'(t)\,dt\\
&=g(t)H_\eps(t)\big|_{0}^\infty-\int_0^\infty H_\eps(t)g'(t)\,dt\\
&=-\int_0^\infty H_\eps(t)g'(t)\,dt,
\end{align*}
since $g(0)=H_\eps(\infty)=0$.
Observe also that
$$g'(t)=\eta(t)
\ \ {\rm{and}}\ \ g''(t)=\eta'(t).$$
Therefore, we obtain that
\begin{align}
 \int_0^\infty
\big(L_\eps(t)-e^t H_\eps(t)-4D_\eps(t)\big) e^{-t}\eta(t)\,dt-2\int_0^\infty
I_\eps'(t) e^{-t}\eta'(t))\,dt=0.
\end{align}
By integration by parts, we have
\[
\int_0^\infty I_\eps'(t) e^{-t}\eta'(t)\,dt
=\int_0^\infty I_\eps'(t) e^{-t}\eta(t)\,dt
-\int_0^\infty I_\eps''(t) e^{-t}\eta(t)\,dt.
\]
Thus we arrive at
\[
 \int_0^\infty
\Big[\big(L_\eps(t)-e^t H_\eps(t)-4D_\eps(t)-2I_\eps'(t)\big)
+2I_\eps''(t)\Big]e^{-t}\eta(t)\,dt=0.
\]
This, combined with
$(e^t H_\epsilon(t))'
=e^t H_\eps(t)-L_\eps(t)$,
yields
\begin{equation}\label{energy_ineq2}
E_\eps'(t)=\Big(\frac12 e^t H_\epsilon(t)+I_\eps(t)-I_\eps'(t)\Big)'=-2D_\eps(t)\le 0.
\end{equation}
In particular, $E_\eps(t)$ is monotonically decreasing with respect to $t$. This completes the proof.
\end{proof}

\medskip
Since
\[
-(e^{-t}I_\eps(t))'+\frac12 H_\eps(t)
=e^{-t}E_\eps(t),
\]
it follows from \eqref{energy_ineq2} that we can integrate from $t$ to $\infty$ to get
\[
e^{-t}I_\eps(t)
+\frac12\int_t^\infty H_\eps(\tau)\,d\tau 
=\int_t^\infty e^{-\tau} E_\eps(\tau)\,d\tau
\le E_\eps(t) \int_t^\infty e^{-\tau}\,d\tau
=e^{-t}E_\eps(t),
\]
this yields
\begin{equation}\label{non-negativity}
I_\eps(t)+\frac{e^t}2\int_t^\infty H_\eps(\tau)\,d\tau\le E_\eps(t).
\end{equation}

From the global energy inequality \eqref{energy_ineq1}, we have that
$$
H_\eps(0)=I_\eps(u_\eps)\le C\eps^2
$$
Since $H_\eps(t)\le H_\eps(0)$ for all
$t\ge 0$, we then have
\begin{equation}\label{E1}
H_\eps(t)\le C\eps^2, \ \forall t\ge 0.
\end{equation}
Since
$$
\int_0^\infty e^{-t}D_\eps(t)\,dt\le I_\eps(u_\eps)\le C\eps^2,
$$
we see that
\begin{equation}\label{E2}
\int_0^1 D_\eps(t)\,dt\le e\int_0^\infty e^{-t}D_\eps(t)\,dt\le C\eps^2.
\end{equation}
This, combined with $\partial_tu_\eps(x,0)=\eps\psi(x)$, implies
\begin{equation}\label{E3}
\int_0^1 I_\eps(t)
\le C\big(\int_0^1 D_\eps(t)\,dt+\int_{\mathbb{R}^m} |\partial_tu_\eps(x,0)|^2\,dx\big)
\le C\eps^2.
\end{equation}
Hence, by H\"older inequality, we have
\begin{equation}\label{E4}
\int_0^1 |I_\eps'(t)|\,dt
\le \big(\int_0^1 D_\eps(t)\,dt\big)^\frac12\big(\int_0^1 I_\eps(t)\,dt\big)^\frac12\le C\eps^2.
\end{equation}

Putting \eqref{E1}, 
\eqref{E3}, and \eqref{E4} together, one has
\begin{equation}\label{energy_ineq3}
\int_0^1 E_\eps(t)\,dt
=\int_0^1 (I_\eps(t)-I_\eps'(t)+\frac12 e^t H_\eps(t))\,dt\le C\eps^2.
\end{equation}

Observe that from \eqref{energy_identity}, we have
that
\begin{equation}
E_\eps(0)=E_\eps(t)+2\int_0^t D_\eps(\tau)\,d\tau,     
\end{equation}
so that by integrating $t\in [0,1]$, we obtain that
\begin{align*}
E_\eps(0)&=\int_0^1 E_\eps(t)\,dt+2\int_0^1\int_0^t D_\eps(\tau)\,d\tau dt\\
&\le \int_0^1 E_\eps(t)\,dt+2\int_0^1D_\eps(t)\,dt\\
&\le C\eps^2,
\end{align*}
where we have applied \eqref{energy_ineq3} and \eqref{E2} in the last step. In particular, we also have  that
\begin{align}\label{energy_ineq4}
0\le E_\eps(t)\le E_\eps(0)\le C\eps^2, 
\end{align}
and
\begin{align}\label{energy_ineq5}
\iint_{\mathbb{R}^n\times\R_+} |\partial_t^2u_\eps(x,t)|^2\,dxdt
=-\frac12\int_0^\infty E_\eps'(t)\,dt
&=\frac12(E_\eps(0)-E_\eps(\infty))\nonumber\\
&=\frac12 E_\eps(0)\le C\eps^2.
\end{align}

Observe that by the monotonic decreasing property of $H_\eps(t)$ and
\eqref{non-negativity}, we have
\begin{align*}
 e^t H_\eps(t+1)\le e^t\int_t^{t+1}H_\eps(\tau)\,d\tau 
 \le e^t\int_t^\infty H_\eps(\tau)\,d\tau
 \le 2E_\eps(t)\le C\eps^2,\ \forall t>0.
\end{align*}
This implies
\begin{align}\label{energy_ineq6}
\int_t^{t+1}\int_{\mathbb{R}^n}
|\nabla u_\eps(x,\tau)|^2\,dxd\tau
&\le
C\frac{1}{\eps^2}
e^{t+1}\int_t^{t+1}e^{-s}L_\eps(s)\,ds\nonumber\\
&\le C\eps^{-2}e^{t+1}H_\eps(t) \le C, 
\ \forall t\ge 0.
\end{align}

\medskip
\subsection{Dual estimates of $(\partial_t^2u_\eps\times u_\eps)(\cdot,t)$}

In this subsection, we prove the following.

\begin{lemma} \label{dualestimate} Assume $\phi\in \dot{H}^1(\mathbb R^n,\mathbb S^{L-1})$ 
and $\psi\in ({H}^1\cap L^\infty)(\mathbb R^n,\mathbb \phi^*T\mathbb{S}^{L-1})$.
Let $u_\eps$ be a minimizer of $I_\eps$ in $\mathcal{H}_{\phi,\eps\psi}$. There exists  $C>0$ such that for any $h\in L^\infty(\mathbb{R}^n)\cap \dot{H}^1(\mathbb{R}^n)$, if
$u_\eps$ is a minimizer of $I_\eps$, then for a.e. $t>0$, 
\begin{align}\label{dual_est}
 \Big|\int_{\mathbb{R}^n} 
 \partial_t^2u_\eps(x,t)\times u_\eps(x,t)\cdot h(x)\,dx\Big|\le Ce^{t}\eps^2\Big(\|h\|_{L^\infty(\mathbb {R}^n)}+\|\nabla h\|_{L^2(\mathbb{R}^n)}\Big).
\end{align}    
\end{lemma}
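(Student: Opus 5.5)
Let $u=u_\eps$. The plan is to derive the Euler–Lagrange equation for $I_\eps$ in its weak, divergence-type form using the sphere constraint, and then integrate it twice in $t$ against the weight. For a test function $\varphi\in C_0^\infty(\R^n\times\R_+,\R^L)$, the variation $u_\delta=(u+\delta\, u\times\varphi)/|u+\delta\,u\times\varphi|$ (for $L=3$; in general we use $u\wedge\varphi$ with the corresponding skew product) stays in $\mathcal{H}_{\phi,\eps\psi}$ when $\varphi$ vanishes near $t=0$. Its first variation at $\delta=0$ is $u\times\varphi$, because the normalization only contributes at second order. Minimality then gives
\[
\iint e^{-t}\big(\partial_t^2u\cdot\partial_t^2(u\times\varphi)+\eps^2\nabla u\cdot\nabla(u\times\varphi)\big)\,dxdt=0 .
\]
We expand $\partial_t^2(u\times\varphi)$ and use $a\cdot(a\times b)=0$. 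In weak form this yields the conservation law
\[
\partial_t^2\big(e^{-t}(\partial_t^2u\times u)\big)-2\partial_t\big(e^{-t}\partial_t^2u\times\partial_tu\big)+\eps^2 e^{-t}\,{\rm div}(\nabla u\times u)=0,
\]
where a term $\partial_t^2u\times\partial_tu$ also appears inside a single time derivative.

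Next we choose $\varphi(x,t)=h(x)\eta(t)$ and set $F(t)=\int_{\R^n}\partial_t^2u\times u\cdot h\,dx$ and $G(t)=\int_{\R^n}\partial_t^2u\times\partial_tu\cdot h\,dx$. The identity becomes the distributional ODE
\[
(e^{-t}F)''-2(e^{-t}G)'=\eps^2e^{-t}\int_{\R^n}(\nabla u\times u)\cdot\nabla h\,dx=:\eps^2e^{-t}R(t),
\]
valid on $(0,\infty)$. Since $|u|=1$, we have $|R(t)|\le\|\nabla u(t)\|_{L^2}\|\nabla h\|_{L^2}$. Integrating once from $t$ to $\infty$ gives
\[
(e^{-t}F)'(t)=2e^{-t}G(t)-\eps^2\int_t^\infty e^{-s}R(s)\,ds .
\]
Here the decay at infinity holds along a sequence, by \eqref{energy_ineq5} and the bounds on $I_\eps$, combined with the weight. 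Integrating a second time yields
\[
e^{-t}F(t)=-\int_t^\infty\Big(2e^{-s}G(s)-\eps^2\int_s^\infty e^{-\tau}R\,d\tau\Big)ds .
\]

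The remaining step is to bound these terms. We have $|G|\le\|h\|_\infty D_\eps^{1/2}(2I_\eps)^{1/2}$. From \eqref{energy_ineq4} and \eqref{non-negativity} we get $I_\eps(s)\le C\eps^2$ uniformly in $s$, and \eqref{energy_ineq5} gives $\int D_\eps\le C\eps^2$. By Cauchy–Schwarz, $\int_t^\infty e^{-s}|G|\,ds\le C\eps^2\|h\|_\infty e^{-t}$. For the $R$ term, \eqref{energy_ineq6} gives $\int_s^{s+1}\|\nabla u\|_{L^2}\le C$, so $\int_s^\infty e^{-\tau}|R|\,d\tau\le Ce^{-s}\|\nabla h\|_{L^2}$, and a further integration gives $C\eps^2e^{-t}\|\nabla h\|_{L^2}$. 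Together these give $|F(t)|\le C\eps^2(\|h\|_\infty+\|\nabla h\|_{L^2})$, which is even better than the stated $e^t$ factor. If the decay at infinity fails, we instead integrate from $0$ to $t$. The boundary values are then controlled via \eqref{E2}–\eqref{E4} on $[0,1]$ and by averaging, and this is where the factor $e^{t}$ comes from.

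I expect the main obstacle to be justifying the boundary terms rigorously, whether at $\infty$ or at $0$. Neither $F$ nor $e^{-t}F$ is pointwise controlled a priori: $D_\eps$ is only $L^1$ in time. So we have to select good times by averaging, for example bounding $\int_t^{t+1}|F|$ by $C\eps$ via \eqref{E2} and using the once-integrated identity to pass from averages to a.e.\ $t$. We must also check that the test functions $h\eta$, with $h$ only in $L^\infty\cap\dot H^1$, are admissible, which can be done by density and truncation in $x$.
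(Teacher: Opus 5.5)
Your proposal is correct and is essentially the paper's argument. The paper tests the cross-product Euler--Lagrange equation with $\eta_\delta(t)h(x)$, a $C^{1,1}$ smoothing of $2(t-T)^+h(x)$, which is exactly your double integration of the distributional ODE from $T$ to $\infty$, and it bounds the resulting $G$- and $R$-terms with the same kind of energy bounds; the paper is only cruder with the factor $e^{-T}$, which is why it gets $e^{t}$ where you obtain a uniform bound. The boundary terms at infinity vanish because $e^{-t}F\in L^1(0,\infty)$ (for $h\in L^2\cap L^\infty$ with $\nabla h\in L^2$), so your fallback of integrating from $t=0$ is never needed, which is just as well since it would require uncontrolled information on $\partial_t^2u_\eps$ at $t=0$.
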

\begin{proof} Since $u_\eps$ is a minimizer of $I_\eps$ over
$\mathcal{H}_{\phi,\psi}^\eps$, it is a weak solution of the Euler-Lagrange equation:
\begin{align}\label{EL1}
\Big[\partial_t^2(e^{-t}\partial_t^2u_\eps)-\nabla\cdot(\eps^2 e^{-t}\nabla u_\eps)\Big]\times u_\eps=0
\ \ {\rm{in}}\ \ \R^n\times(0,+\infty). 
\end{align}
Assume $h\in C_0^\infty(\mathbb{R}^n)$. As in \cite[Theorem 4.2]{Serra-Tilli2012}, let $\eta\in C^{1,1}(\mathbb{R})$ be given by
\[
\eta(t)=\begin{cases} 0 & t\le 0,\\
t^2 & 0<t<1,\\
2t-1 & t\ge 1.  
\end{cases}
\]
For $T>0$ and $\delta>0$, define
$$
\eta_\delta(t)=\delta\eta(\frac{t-T}{\delta}).
$$
Testing \eqref{EL1} by $\eta_\delta(t)h(x)$, we have that
\begin{align*}
0&=\iint_{\mathbb{R}^n\times\R_+}
\Big[e^{-t}\partial_t^2u_\eps\times \partial_t^2(u_\eps \eta_\delta h)+\eps^2 e^{-t}\nabla u_\eps\times \nabla(u_\eps\eta_\delta h)\Big]\,dxdt\\
&=\iint_{\mathbb{R}^n\times\R_+}
e^{-t}[\partial_t^2u_\eps\times (\partial_t^2u_\eps\eta_\delta +u_\eps \eta_\delta''+2\partial_t u_\eps\eta_\delta')h+\eps^2 
\nabla u_\eps\times (\nabla u_\eps h+u_\eps \nabla h) \eta_\delta]\,dxdt\\
&=\iint_{\mathbb{R}^n\times\R_+}
e^{-t}[\partial_t^2u_\eps\times (u_\eps \eta_\delta''+2\partial_tu_\eps\eta_\delta')h+\eps^2\nabla u_\eps\times (u_\eps \nabla h) \eta_\delta]\,dxdt.
\end{align*}
Thus, we have
\begin{align}\label{EL2}
\iint_{\mathbb{R}^n\times\R_+}
e^{-t}\partial_t^2u_\eps\times u_\eps \eta_\delta''h\,dxdt&+2\iint_{\mathbb{R}^n\times\R_+}e^{-t}\partial_t^2u_\eps\times \partial_tu_\eps\eta_\delta' h\,dxdt\nonumber\\
&+\eps^2\iint_{\mathbb{R}^n\times\R_+}e^{-t}\nabla u_\eps\times u_\eps \nabla h \eta_\delta\,dxdt=0.
\end{align}
Since $\eta_\delta''(t)=2\delta^{-1}\chi_{(T,T+\delta)}(t)$, the first term of \eqref{EL2} becomes
\begin{align*}
\iint_{\mathbb{R}^n\times\R_+}
e^{-t}\partial_t^2u_\eps\times u_\eps \eta_\delta''h\,dxdt
&=\frac{2}{\delta}\int_{T}^{T+\delta} 
\int_{\mathbb{R}^n} e^{-t} \partial_t^2 u_\eps\times u_\eps h\,dxdt
\nonumber\\
&\rightarrow 2e^{-T}
\int_{\mathbb{R}^n} \partial_t^2u_\eps\times u_\eps h\,dx,
\ {\rm{as}}\ \ \delta\to 0.
\end{align*}
The second term of \eqref{EL2} can be estimated as follows.
\begin{align*}
&2\iint_{\mathbb{R}^n\times\R_+}e^{-t}
\partial_t^2u_\eps\times \partial_tu_\eps\eta_\delta' h\,dxdt\\
&
=4\int_{T}^{T+\delta} \frac{t-T}{\delta}
e^{-t}\int_{\mathbb{R}^n} \partial_t^2u_\eps\times 
\partial_tu_\eps h\,dxdt\\
&\quad+4\int_{T+\delta}^\infty  
e^{-t}\int_{\mathbb{R}^n} \partial_t^2u_\eps\times 
\partial_tu_\eps h\,dxdt,
\end{align*}  
hence we have
\begin{align*}
&\Big|2\iint_{\mathbb{R}^n\times\R_+}e^{-t}\partial_t^2u_\eps\times \partial_tu_\eps\eta_\delta' h\,dxdt\Big|\\
&\le 4\int_{T}^{\infty} e^{-t}\int_{\mathbb{R}^n} 
|\partial_t^2u_\eps\times \partial_tu_\eps| |h|\,dxdt\\
&\le C\|h\|_{L^\infty(\mathbb{R}^n)}
\Big(\iint_{\mathbb{R}^n\times\R_+}
e^{-t}|\partial_t^2u_\eps|^2\,dxdt\Big)^\frac12 
\Big(\iint_{\mathbb{R}^n\times\R_+}
e^{-t}|\partial_tu_\eps|^2\,dxdt\Big)^\frac12\\
&\le 
C\|h\|_{L^\infty(\mathbb{R}^n)}
\Big(\iint_{\mathbb{R}^n\times\R_+}
e^{-t}|\partial_t^2u_\eps|^2\,dxdt\Big)^\frac12\\
&\quad\cdot\Big(\iint_{\mathbb{R}^n\times\R_+}
e^{-t}|\partial_t^2u_\eps|^2\,dxdt+\int_{\mathbb{R}^n}|\partial_tu_\eps(x,0)|^2\,dx\Big)^\frac12\\
&\le C\eps^2 \|h\|_{L^\infty(\mathbb{R}^n)}.
\end{align*}  

The third term of \eqref{EL2} can be
estimated by 
\begin{align*}
&\lim_{\delta\to 0} 
\Big|\iint_{\mathbb{R}^n\times \R_+}e^{-t}\nabla u_\eps\times u_\eps \nabla h \eta_\delta\,dxdt\Big|\\
&=2\Big|\iint_{\mathbb{R}^n\times\R_+}e^{-t}(t-T)^+\nabla u_\eps\times u_\eps \nabla h\,dxdt\Big|\\
&\le 2\iint_{\mathbb{R}^n\times\R_+}e^{-t}(t-T)^+|\nabla u_\eps||\nabla h|\,dxdt\\
&\le 2\sum_{k=0}^\infty
e^{-k}(k+1)\Big(\int_{T+k}^{T+k+1}\int_{\mathbb{R}^n}|\nabla u_\eps|^2\,dxdt\Big)^\frac12 
\Big(\int_{T+k}^{T+k+1}\int_{\mathbb{R}^n}|\nabla h|^2\,dxdt\Big)^\frac12\\
&\le C\|\nabla h\|_{L^2(\mathbb{R}^n)}
\sup_{k\ge 0} \Big(\int_{T+k}^{T+k+1}\int_{\mathbb{R}^n}|\nabla u_\eps|^2\,dxdt\Big)^\frac12\\
&\le C\|\nabla h\|_{L^2(\mathbb{R}^n)}.
\end{align*}
Substituting these estimates into \eqref{EL2} yields \eqref{dual_est}
holds for a.e. $T>0$.
\end{proof}

\subsection{Passage to the limit}
In this subsection, we illustrate how to pass to the limit $v$ of $v_\eps(x,t)=u_\eps(x, t/\eps)$ so that
$v$ is a weak solution of the wave map equation which satisfies the weak-type
global energy inequality. 

First, let us collect all the estimates for $v_\eps$ after translating those for $u_\eps$. They are as follows.

\begin{align}\label{H1-est}
\int_0^T \int_{\mathbb{R}^n}
|\nabla v_\eps|^2(x,t)\,dxdt\le CT.
\end{align}
\begin{align}\label{t-derivate-est}
\int_{\mathbb{R}^n}
|\partial_tv_\eps|^2(x,t)\,dx\le C, \ 
{\rm{a.e.}}\ t>0,
\end{align}
\begin{align}\label{tt-derivative-est}
\iint_{\mathbb{R}^n\times\R_+}
|\partial_t^2v_\eps|^2(x,t)\,dxdt\le C\eps^{-1},
\end{align}
and
\begin{align}\label{dual-est3}
&\Big|\int_{\mathbb{R}^n}\partial_t^2v_\eps(x,t)\times v_\eps(x,t) h(x)\,dx\Big|\nonumber\\
&\le C(T)\big(\|h\|_{L^\infty(\mathbb{R}^n)}
+\|\nabla h\|_{L^2(\mathbb{R}^n)}\big),
\ {\rm{a.e.}}\ 0<t<T.
\end{align}

After passing to a subsequence, we may assume that there exists a map $v\in \dot{H}^1(\mathbb{R}^n\times\mathbb{R}_+,\mathbb{S}^{L-1})$ such that
\[
(\partial_tv_\eps, \nabla v_\eps)
\rightharpoonup (\partial_tv, \nabla v) 
\ \ {\rm{in}}\ \ L^2(\mathbb{R}^n\times\mathbb{R}_+).
\]
Since $v_\eps$ is a weak solution of
\[
\Big[\partial_t^2\big(e^{-t/\eps} \eps^2 \partial_t^2v_\eps)
-\nabla\cdot(e^{-t/\eps}\nabla v_\eps)
\Big]\times v_\eps=0 \ \ {\rm{in}}\ \ \mathbb{R}^n\times\mathbb{R}_+,
\]
we have, for any test function $\eta\in C_0^\infty(\mathbb{R}^n\times\mathbb{R}_+)$,
we have 
\[
\iint_{\mathbb{R}^n\times\R_+}\big(e^{-t/\eps} \eps^2 \partial_t^2v_\eps\big)\times \partial_t^2(v_\eps\eta)+(e^{-t/\eps}\nabla v_\eps)\times \nabla(v_\eps\eta)\,dxdt
=0. 
\]
After expansion and cancellations of certain terms, this identity reduces to
\[
\iint_{\mathbb{R}^n\times\R_+}\big(e^{-t/\eps} \eps^2 \partial_t^2v_\eps\big)\times (2\partial_tv_\eps\eta'+v_\eps \eta'')+(e^{-t/\eps}\nabla v_\eps)\times (v_\eps\nabla\eta)\,dxdt
=0. 
\]
Let $\hat{\eta}=e^{-t/\eps}\eta$. Then we have
$$
2\partial_tv_\eps\eta'+\partial_t^2v_\eps\eta
=e^{t/\eps}\Big[2\partial_tv_\eps\big(\hat{\eta}'+\frac{1}{\eps}\hat{\eta}\big)
+v_\eps\big(\hat{\eta}''+\frac{2}{\eps}\hat{\eta}'+\frac{1}{\eps^2}\hat{\eta}\big)\Big],
\ \ \nabla\eta=e^{t/\eps}\nabla\hat{\eta}.
$$
Substituting this into the equation above, we obtain that
\begin{align*}
&\iint_{\mathbb{R}^n\times\R_+}
\big[-\partial_tv_\eps\times \partial_t\big(v_\eps\hat{\eta}\big)
+\nabla v_\eps\times (v_\eps\nabla\hat{\eta})\big]\,dxdt\\
&=-\iint_{\mathbb{R}^n\times\R_+}
\partial_t^2v_\eps\times \Big[2\partial_tv_\eps\big(\eps^2\hat{\eta}'+\eps
\hat{\eta}\big)+v_\eps\big(\eps^2\hat{\eta}''+2\eps\hat{\eta}'\big)\Big]\,dxdt.
\end{align*}
From \eqref{t-derivate-est} and
\eqref{tt-derivative-est}, it is easy to see that
\[
\iint_{\mathbb{R}^n\times\R_+}
\partial_t^2v_\eps\times \Big[2\partial_tv_\eps\big(\eps^2\hat{\eta}'+\eps
\hat{\eta}\big)+v_\eps\big(\eps^2\hat{\eta}''+2\eps\hat{\eta}'\big)\Big]\,dxdt\to 0
\]
as $\eps\to 0$. Thus, after sending $\eps\to 0$ in the equation above, we arrive at
\[
\iint_{\mathbb{R}^n\times\R_+}
\big[-\partial_tv\times \partial_t\big(v\hat{\eta}\big)
+\nabla v\times (v\nabla\hat{\eta})\big]\,dxdt=0,
\]
this is,
$v$ is a weak solution of
\[
\partial_t^2v\times v-\nabla\cdot(\nabla v\times v)=0
\ \ {\rm{in}}\ \ \mathbb{R}^n\times\mathbb{R}_+.
\]
This is equivalent to that $v$ is a weak solution of the wave map equation:
\[
v_{tt}-\Delta v\perp T_v \mathbb{S}^{L-1}
\ \ {\rm{in}}\ \ \mathbb{R}^n\times\mathbb{R}_+.
\]

Next, we want to verify that $v$ satisfies the initial condition
\[
v(x,0)=\phi(x),\ \ \partial_tv(x,0)=\psi(x) 
\ \ {\rm{in}}\ \ \mathbb{R}^n.
\]
The first condition follows from the compactness of traces. For any $T>0$ and $R>0$, 
$$
H^1(B_R\times [0,T])\hookrightarrow L^2(B_R\times\{0\}),
$$
and $v_\eps(x,0)=\phi(x)$ in $\mathbb{R}^n$.

The second condition follows from \eqref{dual-est3} and the following argument, similar to \cite[page 1571]{Serra-Tilli2012}. For $\eta(t)\in C_0^\infty(\mathbb{R})$, we have
\begin{align*}
\int_0^\infty\eta(t)\int_{\mathbb{R}^n}
\partial_t(\partial_tv_\eps\times v_\eps) h\,dxdt
&=-\int_0^\infty\eta'(t)\int_{\mathbb{R}^n}
\partial_tv_\eps\times v_\eps h\,dxdt\\
&\quad-\eta(0)
\int_{\mathbb{R}^n}\psi(x)\times \phi(x) h(x)\,dx.
\end{align*}
Passing to the limit $\eps\to 0$ in this identity and applying \eqref{dual-est3}, we obtain
\begin{align*}
\int_0^\infty\eta(t)\int_{\mathbb{R}^m}
\partial_t(\partial_tv\times v) h\,dxdt
&=-\int_0^\infty\eta'(t)\int_{\mathbb{R}^n}
\partial_tv\times v h\,dxdt\\
&\quad-\eta(0)
\int_{\mathbb{R}^n}\psi(x)\times \phi(x) h(x)\,dx.
\end{align*}
On the other hand, by integration by parts we have
\begin{align*}
\int_0^\infty\eta(t)\int_{\mathbb{R}^n}
\partial_t(\partial_tv\times v)h\,dxdt
&=-\int_0^\infty\eta'(t)\int_{\mathbb{R}^n}
\partial_tv\times v h\,dxdt\\
&\quad-\eta(0)\int_{\mathbb{R}^n}
\partial_tv(x,0)\times \phi (x) h(x)\,dx.
\end{align*}
Hence
\[
\eta(0)
\int_{\mathbb{R}^n}\psi(x)\times \phi(x) h(x)\,dx
=\eta(0)\int_{\mathbb{R}^n}
\partial_tv(x,0)\times \phi (x) h(x)\,dx.
\]
Since $\eta$ and $h$ are arbitrarily chosen,
we conclude that
\[
(\partial_tv(x,0)-\psi(x))\times \phi(x)=0,
\ x\in\mathbb{R}^n.
\] 
Since $\partial_tv(x,0)-\psi(x)\in T_{\phi(x)}\mathbb{S}^{L-1}$, or equivalently, 
\[
\langle \partial_tv(x,0)-\psi(x),\phi(x)\rangle=0,\ \
x\in\mathbb{R}^m.
\]
Thus we obtain
\[
\partial_tv(x,0)=\psi(x),\ \ x\in\mathbb{R}^n.
\]
This completes the proof, except for part of the global energy inequality. 
\qed

\medskip
\subsection{Global Energy Inequality}

In this subsection, we closely follow the arguments of \cite{Serra-Tilli2012} to show the following result.
\begin{theorem}\label{globalenergyineq1}
For $v$ obtained by the previous subsection, 
\begin{align}\label{globalenergyineq2}
{E}(v(t))
=\int_{\mathbb{R}^n}(|\partial_tv|^2+|\nabla v|^2)(x,t)\,dx
\le {E}(v(0))=\int_{\mathbb{R}^n}(|\psi|^2+|\nabla \phi|^2)(x)\,dx    
\end{align}
holds for a.e. $t>0$.
\end{theorem}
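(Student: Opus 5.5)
We follow the Serra--Tilli strategy: refine the estimates for $u_\eps$ so that the energy of $v_\eps$ is controlled by the sharp quantity $\int(|\psi|^2+|\nabla\phi|^2)\,dx$ up to $o(1)$, and then pass to the limit using weak lower semicontinuity. The inequality $E_\eps(t)\le E_\eps(0)$ from Lemma \ref{energyestimate} is already exact. What is missing is a precise rescaled version of it, in place of the crude bound $C\eps^2$.

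\textbf{Step 1: sharp value of $E_\eps(0)$.} Write $E_\eps(0)=I_\eps(0)-I_\eps'(0)+\frac12 H_\eps(0)$. We have $I_\eps(0)=\frac{\eps^2}{2}\int|\psi|^2$ from the boundary data, and $H_\eps(0)=I_\eps(u_\eps)\le \eps^2\int|\nabla\phi|^2+C\eps^3$ by Lemma \ref{nonempty}. The term $I_\eps'(0)=\int\langle\partial_tu_\eps,\partial_t^2u_\eps\rangle(x,0)\,dx$ is only defined as a trace. To handle it we use $E_\eps(0)=\int_0^1E_\eps\,dt+2\int_0^1\int_0^tD_\eps$, as was done earlier.

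To get sharp constants we will instead average over $[0,\delta]$ with $\delta=\delta(\eps)\to0$, for instance $\delta=\eps^{1/2}$ in $u$-time. On $[0,\delta]$ we have $\int_0^\delta D_\eps\le C\eps^2$ with a constant we can take small; more precisely, $\int_0^\infty D_\eps=\frac12E_\eps(0)$ and $e^{-t}\le 1$. This gives $\int_0^\delta|I_\eps'|\le(\delta\cdot\sup I_\eps)^{1/2}(\int_0^\delta D_\eps)^{1/2}=O(\eps^2\delta^{1/2})$ and $I_\eps(t)\le \frac{\eps^2}{2}\int|\psi|^2+o(\eps^2)$ on $[0,\delta]$. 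Consequently
\begin{equation*}
E_\eps(0)\le \frac{1}{\delta}\int_0^\delta E_\eps\,dt+2\int_0^\delta D_\eps\,dt.
\end{equation*}
The last term is the obstacle. We must show it is $o(\eps^2)$, which amounts to showing that $\int_0^\infty e^{-t}D_\eps$ is $o(\eps^2)$. This should follow by comparing $I_\eps(u_\eps)=\int e^{-t}D_\eps+\eps^2\int e^{-t}\!\int|\nabla u_\eps|^2$ with $\eps^2\int|\nabla\phi|^2+C\eps^3$, and then proving a matching lower bound $\eps^2\int_0^\infty e^{-t}\int|\nabla u_\eps|^2\ge \eps^2\int|\nabla\phi|^2-o(\eps^2)$. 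That lower bound comes from weak lower semicontinuity of $\int|\nabla u_\eps(\cdot,t)|^2$ as $u_\eps(\cdot,t)\to\phi$ in $L^2_{\rm loc}$ uniformly for bounded $t$, which holds because $\int|\partial_tu_\eps|^2\le C\eps^2$. The net conclusion is
\begin{equation*}
\limsup_{\eps\to0}\eps^{-2}E_\eps(0)\le \tfrac12\int(|\psi|^2+|\nabla\phi|^2)\,dx.
\end{equation*}

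\textbf{Step 2: lower bound for $E_\eps(t)$ in terms of $v_\eps$.} By \eqref{non-negativity}, $I_\eps(t)+\frac{e^t}{2}\int_t^\infty H_\eps\le E_\eps(t)\le E_\eps(0)$. Fix $s>0$ in $v$-time and set $t=s/\eps$. Then $\eps^{-2}I_\eps(s/\eps)=\frac12\int|\partial_tv_\eps(\cdot,s)|^2$. For the gradient term,
\begin{equation*}
\eps^{-2}e^t\int_t^\infty H_\eps(\tau)\,d\tau\ \ge\ \int_t^\infty e^{-(\tau-t)}(\tau-t)\int|\nabla u_\eps(\cdot,\tau)|^2\,dx\,d\tau.
\end{equation*}
In $v$-variables this is an average of $\int|\nabla v_\eps(\cdot,\sigma)|^2$ over $\sigma\in(s,s+O(\eps))$ against a probability kernel of total mass $1$. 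We therefore obtain
\begin{equation*}
\frac12\int|\partial_tv_\eps(\cdot,s)|^2+\frac12\int_0^\infty \rho_\eps(\sigma-s)\int|\nabla v_\eps(\cdot,\sigma)|^2\,d\sigma\le \tfrac12\int(|\psi|^2+|\nabla\phi|^2)+o(1),
\end{equation*}
where $\rho_\eps$ is an approximate identity.

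\textbf{Step 3: passage to the limit.} Integrate the inequality of Step 2 against an arbitrary nonnegative $\zeta\in C_0^\infty(0,\infty)$ with $\int\zeta=1$. Using weak $L^2$ convergence of $(\partial_tv_\eps,\nabla v_\eps)$ and lower semicontinuity of the convex functional $\iint\zeta(|\partial_tv|^2+|\nabla v|^2)$ (after shifting the mollifier onto $\zeta$, i.e. $\zeta*\rho_\eps\to\zeta$ uniformly), we get $\int\zeta(s)E(v(s))\,ds\le E(v(0))$. Letting $\zeta$ concentrate at Lebesgue points of $s\mapsto E(v(s))$ yields \eqref{globalenergyineq2} for a.e. $t$.

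The main difficulty is Step 1: showing that the $\partial_t^2$-contribution near $t=0$ and the $I_\eps'$ term are $o(\eps^2)$. If the lower semicontinuity argument above falls short, the fallback is to compare $u_\eps$ with a competitor obtained by gluing $w_\eps$ on $[0,\delta]$ to $u_\eps$, in order to extract the sharp constant.
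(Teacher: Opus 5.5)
Your Steps 2 and 3 are essentially the Serra--Tilli limiting argument (pp.~1572--1573 of \cite{Serra-Tilli2012}) that the paper invokes together with \eqref{non-negativity}. The genuine difference is Step 1, the sharp bound on $E_\eps(0)$.

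The paper proves Lemma~\ref{sharp_est0} with a rate. It writes $\frac1\delta\int_0^\delta I_\eps'$ as a pairing of $\partial_t^2u_\eps\times u_\eps$ with $\partial_tu_\eps\times u_\eps$ and subtracts the time-zero value $\eps\,\psi\times\phi$. The main piece is bounded by $C\eps^3$ \emph{uniformly in $\delta$} via the dual estimate of Lemma~\ref{dualestimate} with $h=\psi\times\phi\in L^\infty\cap\dot H^1$. The remainder is at most $2\int_0^\delta D_\eps\to0$, so $|I_\eps'(0)|\le C\eps^3$.

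You bypass the dual estimate. You compare $I_\eps(u_\eps)=\int_0^\infty e^{-t}D_\eps+\eps^2\int_0^\infty e^{-t}\int|\nabla u_\eps|^2$ with the competitor bound of Lemma~\ref{nonempty} and with $\liminf_{\eps\to0}\int_0^\infty e^{-t}\int|\nabla u_\eps|^2\ge\int|\nabla\phi|^2$. This gives $\theta(\eps):=\eps^{-2}\int_0^\infty e^{-t}D_\eps\,dt\to0$. That liminf inequality does hold: $\|u_\eps(\cdot,t)-\phi\|_{L^2}\le Ct\eps$ because $I_\eps(t)\le E_\eps(t)\le C\eps^2$ by \eqref{non-negativity} and \eqref{energy_ineq4}; then apply lower semicontinuity for each $t$ and Fatou. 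So no fallback competitor is needed.

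Your route yields only $o(\eps^2)$, which is all the limit passage requires. It uses nothing about the target beyond the competitor bound and Lemma~\ref{energyestimate}. The paper's route gives an $O(\eps^3)$ rate and recycles the dual estimate it needs anyway for $\partial_tv(\cdot,0)=\psi$, but it depends on the cross-product (or $SO(m)$) structure.

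The bookkeeping of the $I_\eps'$ term in Step 1 must be corrected, though. What enters $\frac1\delta\int_0^\delta E_\eps$ is $\frac1\delta\int_0^\delta(-I_\eps')$, and Cauchy--Schwarz gives
\[
\frac1\delta\int_0^\delta|I_\eps'|\,dt\le\frac1\delta\Big(2\int_0^\delta I_\eps\Big)^{1/2}\Big(\int_0^\delta D_\eps\Big)^{1/2}\le C\eps^2\Big(\frac{\theta(\eps)}{\delta}\Big)^{1/2},
\]
not $O(\eps^2\delta^{1/2})$. With merely $\int_0^\delta D_\eps\le C\eps^2$ this is $O(\eps^2\delta^{-1/2})$. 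So $2\int_0^\delta D_\eps$ is not the only obstacle: the smallness of $\theta$ is needed here as well, and it helps only when $\theta(\eps)\ll\delta$. Since you have no rate for $\theta$, the a priori choice $\delta=\eps^{1/2}$ fails.

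Instead, fix $\delta$ and let $\eps\to0$ to get $\limsup_{\eps\to0}\eps^{-2}E_\eps(0)\le\frac12\int|\psi|^2+\frac{e^\delta}{2}\int|\nabla\phi|^2$; then let $\delta\to0$. Also drop the remark $\int_0^\infty D_\eps=\frac12E_\eps(0)$, which is circular here. With this fix your argument is complete.
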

\begin{proof} Applying
Lemma \ref{sharp_est0} below and \eqref{non-negativity} above, we can employ the same arguments as in 
\cite[pages 1572-1573]{Serra-Tilli2012} to establish \eqref{globalenergyineq2}.   
\end{proof}

According to \cite{Serra-Tilli2012}, the key step to establish Theorem \ref{globalenergyineq1} is the following Lemma
that concerns an improved estimate of \eqref{energy_ineq4}. Namely, 

\begin{lemma} \label{sharp_est0} Assume $\phi\in \dot{H}^1(\mathbb R^n,\mathbb S^{L-1})$ 
and $\psi\in ({H}^1\cap L^\infty)(\mathbb R^n,\mathbb \phi^*T\mathbb{S}^{L-1})$.
Let $u_\eps$ be a minimizer of $I_\eps$ in $\mathcal{H}_{\phi,\eps\psi}$.  Then
\begin{align}\label{sharp_est}
 E_\eps(0)
 =I_\eps(0)-I_\eps'(0)+\frac12 H_\eps(0)\le
 \frac12\eps^2\int_{\mathbb{R}^n}
 (|\nabla\phi|^2+|\psi|^2)(x)\,dx+C\eps^3.
\end{align}
    
\end{lemma}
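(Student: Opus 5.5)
The plan is to first turn $E_\eps(0)$ into an explicit weighted integral of $u_\eps$, and then bound that integral using the minimality of $u_\eps$ and the competitor $w_\eps$ from Lemma \ref{nonempty}. Integrating the identity $-(e^{-t}I_\eps(t))'+\frac12 H_\eps(t)=e^{-t}E_\eps(t)$ over $(0,\infty)$ gives
\[
I_\eps(0)+\frac12\int_0^\infty H_\eps(\tau)\,d\tau=\int_0^\infty e^{-\tau}E_\eps(\tau)\,d\tau .
\]
This requires the boundary term $e^{-t}I_\eps(t)\to 0$ as $t\to\infty$, which follows from \eqref{non-negativity} and \eqref{energy_ineq4}. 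By Lemma \ref{energyestimate}, $E_\eps(\tau)=E_\eps(0)-2\int_0^\tau D_\eps$, so the right-hand side equals $E_\eps(0)-2\int_0^\infty e^{-s}D_\eps(s)\,ds$. By Fubini, $\int_0^\infty H_\eps=\int_0^\infty s e^{-s}L_\eps(s)\,ds$. Since $I_\eps(0)=\frac12\eps^2\int|\psi|^2$, this yields the exact formula
\[
E_\eps(0)=\frac12\eps^2\int_{\mathbb{R}^n}|\psi|^2\,dx+\int_0^\infty e^{-s}\Big(\frac{s}{2}L_\eps(s)+2D_\eps(s)\Big)ds .
\]
The $\psi$ term is therefore already sharp. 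What remains is to show $\int_0^\infty e^{-s}(\frac s2 L_\eps+2D_\eps)\,ds\le \frac12\eps^2\int|\nabla\phi|^2+C\eps^3$.

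For the remaining integral I would follow Serra--Tilli and recognise it as a derivative of the minimal value with respect to the parameter. Return to the original scale: $v_\eps(x,t)=u_\eps(x,t/\eps)$ minimizes $E_\eps$ with the fixed data $(\phi,\psi)$, and the admissible class does not depend on $\eps$. Set $V(\lambda)=\min E_\lambda$ over that class. For every $\lambda$ the fixed map $v_\eps$ is admissible, so $V(\lambda)\le E_\lambda(v_\eps)$, with equality at $\lambda=\eps$. Both functions are smooth enough in $\lambda$ (the weight $e^{-t/\lambda}/\lambda$ is explicit), so $V$ is locally Lipschitz. At every differentiability point $V'(\eps)=\partial_\lambda E_\lambda(v_\eps)|_{\lambda=\eps}$. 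Computing $\partial_\lambda$ of $\lambda^{-1}e^{-t/\lambda}\{\lambda|\partial_t^2v|^2+\lambda^{-1}|\nabla v|^2\}$ and rescaling back to $u_\eps$ should express $V'(\eps)$ as an explicit combination of $\eps^{-4}\int e^{-s}sL_\eps$, $\eps^{-4}\int e^{-s}L_\eps$ and the $\partial_t^2$ part $\eps^{-4}\int e^{-s}D_\eps$. Combining this with the formula above should give an identity of the form
\[
\eps^{-2}\int_0^\infty e^{-s}\Big(\frac s2L_\eps+2D_\eps\Big)ds=\frac{d}{d\eps}\big(\text{power of }\eps\cdot V(\eps)\big)+\text{controlled terms},
\]
where the controlled terms are bounded by \eqref{energy_ineq5} and \eqref{E1}.

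The final step, which I expect to be the main obstacle, is to turn this derivative identity into the sharp upper bound. Lemma \ref{nonempty} gives, after scaling, $\eps^3V(\eps)=I_\eps(u_\eps)\le\eps^2\int|\nabla\phi|^2+C\eps^3$, that is, $\eps V(\eps)\le\int|\nabla\phi|^2+C\eps$. The map $\lambda\mapsto E_\lambda(v_\eps)$ touches $V$ from above at $\lambda=\eps$. I would exploit this tangency: for $\lambda<\eps$, $V(\lambda)\le E_\lambda(v_\eps)$, and a one-sided Taylor expansion of $E_\lambda(v_\eps)$ around $\lambda=\eps$ transfers the upper bound on $V$ into an upper bound on the derivative, at least in an averaged sense. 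The alternative route is to bypass $V$ and apply the inner-variation argument of Lemma \ref{energyestimate} to a global time dilation $U_\delta(x,t)=u_\eps(x,(1+\delta)t)$, corrected near $t=0$ so that the datum $\eps\psi$ is preserved. Both routes require a bound with constant exactly $\frac12$ in front of $\eps^2\int|\nabla\phi|^2$, and only pure $O(\eps^3)$ errors in what is left. Those error terms are where the hypotheses $\psi\in H^1\cap L^\infty$ enter, through the $C\eps^3$ term of Lemma \ref{nonempty}. If the pointwise-in-$\eps$ bound proves delicate because $V$ is only a.e. differentiable, I would first prove the estimate for a.e. $\eps$ or in averaged form, which suffices for the subsequence used in Theorem \ref{globalenergyineq1}.
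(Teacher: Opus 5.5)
Your exact formula for $E_\eps(0)$ is correct, but it only restates the problem, and the step meant to finish the proof does not work. Compared with the definition $E_\eps(0)=I_\eps(0)-I_\eps'(0)+\frac12H_\eps(0)$, your formula says
$$-I_\eps'(0)+\tfrac12H_\eps(0)=\int_0^\infty e^{-s}\big(\tfrac s2L_\eps+2D_\eps\big)\,ds.$$
Lemma \ref{nonempty} already gives $\frac12H_\eps(0)=\frac12I_\eps(u_\eps)\le\frac12\eps^2\int|\nabla\phi|^2+C\eps^3$. So the lemma reduces to $-I_\eps'(0)\le C\eps^3$, and you have traded this one boundary term for bulk integrals that are no better controlled.

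Carrying out your envelope computation at a differentiability point of $V(\lambda)=\min E_\lambda$ gives exactly
\[
\int_0^\infty e^{-s}\Big(\frac s2L_\eps+2D_\eps\Big)ds=\frac{\eps^2}{2}\,\frac{d}{d\eps}\big(\eps^2V(\eps)\big)+\int_0^\infty e^{-s}D_\eps(s)\,ds .
\]
With any other power $\eps^kV$, the last term still appears with coefficient $+1$. So your ``controlled terms'' are $\int_0^\infty e^{-s}D_\eps$. For this, \eqref{energy_ineq5} and \eqref{E1} give only $C\eps^2$, the same order as the main term, so the constant $\frac12$ is lost.

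Independently, the competitor bound $\eps^2V(\eps)\le\eps\int|\nabla\phi|^2+C\eps^2$ says nothing about $\frac{d}{d\eps}(\eps^2V)$ from above. Tangency $V(\lambda)\le E_\lambda(v_\eps)$ only places $\partial_\lambda E_\lambda(v_\eps)|_{\lambda=\eps}$ in the superdifferential of $V$. An upper bound on it would need a sharp lower bound on $V(\lambda)$ for $\lambda<\eps$, plus control of $\partial_\lambda^2E_\lambda(v_\eps)$ finer than what \eqref{E1} provides. Neither is available. Nor is there concavity in $\lambda$ for free, since $\partial_\lambda^2e^{-t/\lambda}>0$ for $t>2\lambda$.

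The missing ingredient is the dual estimate, Lemma \ref{dualestimate}. It comes from testing the outer (target-valued) Euler--Lagrange equation with $\eta_\delta(t)h(x)$. Changes of $\eps$ are just time dilations, i.e. inner variations, and they cannot see this information.

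The paper keeps the split $E_\eps(0)=I_\eps(0)-I_\eps'(0)+\frac12H_\eps(0)$. It uses $I_\eps(0)=\frac12\eps^2\int|\psi|^2$ and $H_\eps(0)=I_\eps(u_\eps)\le\eps^2\int|\nabla\phi|^2+C\eps^3$, and then proves $|I_\eps'(0)|\le C\eps^3$ as follows:
\begin{itemize}
\item Since $|u_\eps|=1$, $I_\eps'(t)$ is a multiple of $\int\langle\partial_t^2u_\eps\times u_\eps,\partial_tu_\eps\times u_\eps\rangle\,dx$, and $\partial_t(\partial_tu_\eps\times u_\eps)=\partial_t^2u_\eps\times u_\eps$.
\item Average over $t\in(0,\delta)$ and write $(\partial_tu_\eps\times u_\eps)(t)=\eps\psi\times\phi+\int_0^t\partial_t^2u_\eps\times u_\eps$.
\item The increment contributes at most $C\int_0^\delta D_\eps\,dt\to0$.
\item The main part is bounded via Lemma \ref{dualestimate} with $h=\psi\times\phi$, giving $\eps\cdot C\eps^2\big(\|\psi\times\phi\|_{L^\infty}+\|\nabla(\psi\times\phi)\|_{L^2}\big)$.
\end{itemize}
This last step is where $\psi\in H^1\cap L^\infty$ is really used, not only in the $C\eps^3$ term of Lemma \ref{nonempty}.
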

\begin{proof} From the definition, we know
\begin{align}\label{I-est}
I_\eps(0)=\frac12\int_{\mathbb{R}^n}|\partial_tu_\eps(x,0)|^2\,dx
=\frac12\int_{\mathbb{R}^n}\eps^2|\psi(x)|^2\,dx. 
\end{align}
By \eqref{energy_ineq1}, we have 
\begin{align}\label{H-est}
\frac12H_\eps(0)
=\frac12 I_\eps(u_\eps)
\le\frac12\big(\int_{\mathbb{R}^n}
\eps^2|\nabla\phi(x)|^2\,dx+C\eps^3\big).
\end{align}
Now we claim that
\begin{align}\label{I'-est}
 |I_\epsilon'(0)|
 \le C\eps^3.
\end{align}
To do it, first observe that 
$|u_\eps(x,t)|=1$ implies that
$\langle \partial_tu_\eps, u_\eps\rangle (x,t)=0$
and hence
\[
|\partial_tu_\eps|^2(x,t)
=|\partial_tu_\eps\times u_\eps|^2(x,t),
\ (t,x)\in \mathbb{R}^n\times \mathbb{R}_+
\]
so that
\[
I_\eps(t)=\int_{\mathbb{R}^n}
|\partial_tu_\eps\times u_\eps|^2(x,t)\,dx.
\]
Next we choose sufficiently small $\delta>0$ and estimate
\begin{align}\label{energy_est1}
\Big|\frac{1}{\delta}\int_0^\delta I_\eps'(t)\,dt\Big|
&\le \Big|\frac{2}{\delta}\int_0^\delta
\int_{\mathbb{R}^n}\langle (\partial_t^2u_\eps\times 
u_\eps)(x,t), (\partial_tu_\eps\times u_\eps)(x,t)-(\partial_tu_\eps\times u_\eps)(x,0)\rangle\,dxdt\Big|\nonumber\\
&\ \ +\Big|\frac{2}{\delta}\int_0^\delta
\int_{\mathbb{R}^n}\langle (\partial_t^2u_\eps\times u_\eps)(x,t), (\partial_tu_\eps\times u_\eps)(x,0)\rangle\,dxdt\Big|\nonumber\\
&=A_\eps+B_\eps.
\end{align}
From \eqref{dual_est} 
\begin{align*}
 B_\eps&=\Big|\frac{2\eps}{\delta}\int_0^\delta
\int_{\mathbb{R}^n}\langle (\partial_t^2u_\eps\times u_\eps)(x,t), (\psi\times \phi)(x)\rangle\,dxdt\Big|\\
 &\le  
2\eps\sup_{0\le t\le\delta}
\Big|\int_{\mathbb{R}^n}\langle (\partial_t^2u_\eps\times u_\eps)(x,t), (\psi\times \phi)(x)\rangle\,dx\Big|\\
&\le C\eps^3\big(\|\nabla(\psi\times\phi)\|_{L^2(\mathbb{R}^n)}
+\|\psi\times\phi\|_{L^\infty(\mathbb{R}^n)}\big)\\
&\le C\eps^3
\Big(\|\psi\|_{L^\infty(\mathbb{R}^n)}
+\|\nabla\psi\|_{L^2(\mathbb{R}^n)}
+\|\psi\|_{L^\infty(\mathbb{R}^n)}
\|\nabla\phi\|_{L^2(\mathbb{R}^n)}\Big)\\
&\le C\eps^3.
\end{align*}
As for $A_\eps$, we can estimate as follows
\begin{align*}
A_\eps   
&=\Big|\frac{2}{\delta}\int_0^\delta
\int_{\mathbb{R}^m}\langle (\partial_t^2u_\eps\times 
u_\eps)(x,t), \int_0^t \partial_t(\partial_tu_\eps\times u_\eps)(x, \tau)\,d\tau\rangle\,dxdt\Big|\\
&=
\Big|\frac{2}{\delta}\int_0^\delta
\int_{\mathbb{R}^n}\langle (\partial_t^2u_\eps\times 
u_\eps)(x,t), \int_0^t (\partial_t^2u_\eps\times u_\eps)(x, \tau)\,d\tau\rangle\,dxdt\Big|\\
&\le \frac{2}{\delta}
\Big(\int_0^\delta
\int_{\mathbb{R}^n}|\partial_t^2u_\eps(x,t)|^2\,dxdt\Big)^\frac12
\Big(\int_0^\delta
\int_{\mathbb{R}^n}t\int_0^t|\partial_t^2u_\eps|^2(x,\tau)\,d\tau\,dxdt\Big)^\frac12\\
&\le 2\int_0^\delta
\int_{\mathbb{R}^n}|\partial_t^2u_\eps(x,t)|^2\,dxdt
\to 0,\ \ {\rm{as}}\ \ \delta\to 0.
\end{align*}
Substituting the estimates for $A_\eps$
and $B_\eps$ into \eqref{energy_est1} and sending $\delta\to 0$ yields
\eqref{I'-est}.
It is readily seen that \eqref{sharp_est}
follows from \eqref{I-est}, \eqref{H-est},
and \eqref{I'-est}.
\end{proof}

\section{Proof of Theorem \ref{main} for $SO(m)$-target manifold}

In this section, we will indicate the necessary changes from the proof presented in Section 2 in order to show Theorem \ref{main} for $N=SO(m)$.

First, recall the special orthogonal group of order $m\ge 1$, $SO(m)$, is defined by
\[
SO(m)=\Big\{U\in \R^{m\times m}: \ U^TU=\mathbb{I}_m, 
\ {\rm{det}}(U)=1\Big\}. 
\]
The corresponding Lie algebra of $SO(m)$, {\it so}($m$), is given by 
\[
{\it so}(m)=\Big\{U\in \R^{m\times m}: \ U^T+U=0\Big\}.
\]
Both $SO(m)$ and ${\it so}(m)$ are equipped with the matrix norm:
\[
|A|=\Big({\rm{tr}}(A^TA)\Big)^\frac12=\Big(\sum_{i, j=1}^{m}A_{ij}^2\Big)^\frac12.
\]

\subsection{$\mathcal{H}_{\phi,\psi}^\eps$ is non-empty}
Define $$U_\eps(x,t)=\exp_{\phi(x)}(t\eps\psi(x),
\ \forall(x,t)\in \R^n\times\R_+.$$
Then $U_\eps$ is an element of $\mathcal{H}_{\phi,\eps\psi}$.
Observe that we can rewrite $U_\eps$ as
\[
U_\eps(x,t)=\phi(x)\exp_e(t\eps \Omega(x)), \ {\rm{with}}\ 
\Omega(x)=\phi^{-1}(x)\psi(x).
\]
Then it is easy to see $U_\eps(x,0)=\phi(x)$ and
\begin{align}\label{t-derivative}
\partial_t U_\eps(x,t)=\eps\phi(x)\Omega(x) \exp_e(t\eps \Omega(x))
=\eps\psi(x)\exp_e(t\eps \Omega(x)),
\end{align}
so that $\partial_t U_\eps(x,0)=\eps \phi(x)\Omega(x)=\eps\psi(x)$.
Next, we want to estimate $I_\eps(U_\eps)\le C\eps^2$.
This, in particular, implies that $U_\eps\in \mathcal{H}_{\phi,\eps\psi}$.

Taking $\partial_t$ of \eqref{t-derivative}, we obtain
\begin{align}\label{tt-derivative}
\partial_t^2 U_\eps(x,t)=\eps^2\psi(x)\Omega(x)\exp_e\big(t\eps \Omega(x)\big).
\end{align}
Taking $x$-derivative of $U_\eps$, we obtain
\begin{align}\label{x-derivative}
\nabla U_\eps(x,t)
=\nabla\phi(x)\exp_e(t\eps\Omega(x))
+\phi(x)\exp_e(t\eps\Omega(x))(t\eps\nabla\Omega(x)).
\end{align}
It follows from \eqref{tt-derivative} and \eqref{x-derivative}
that
\[
\big|\partial_t^2 U_\eps(x,t)\big|\le \eps^2|\psi(x)|^2,
\ \big|\nabla U_\eps(x,t)\big|\le \Big(|\nabla\phi|+Ct\eps\big(|\nabla\psi|+
|\nabla\phi||\psi|\big)\Big)(x,t).
\]
This yields
\begin{align}\label{energyupbound}
I_\eps(U_\eps)
&\le \int_{\R^n} \big(\eps^2|\nabla\phi|^2
+C\eps^3(|\nabla\phi||\nabla\psi|+|\nabla\phi|^2|\psi|)
+C\eps^4(|\psi|^4+|\nabla\psi|^2+|\nabla\phi|^2|\psi|^2)\,dxdt\nonumber\\
&\le \eps^2\int_{\R^n}|\nabla\phi|^2
+C\eps^3,
\end{align}
with $C$ depends on $\|\nabla\phi\|_{L^2(\R^n)}$, 
$\|\nabla\psi\|_{L^2(\R^n)}$,
and $\|\psi\|_{(L^\infty\cap L^4)(\R^n)}$.

\smallskip
It follows from the calculation above that both Lemma \ref{nonempty} and Lemma \ref{existenemin} remain true. 
Since Lemma \ref{energyestimate} is obtained through inner variations in the $t$-direction, it remains true.

Therefore, it suffices to show any weak limiting map $V$
solves the wave map equation along with the Cauchy data. 

\subsection{Passing to the limit} 
Observe that if $U_\eps\in\mathcal{H}_{\phi, \eps\psi}$ 
minimizes $I_\eps$, then $V_\eps(x,t)=U_\eps(x, \frac{t}{\eps})$
would minimize $E_\eps$ over $\mathcal{H}_{\phi,\psi}$,
that is, $(V_\eps, \partial_t V_\eps)\big|_{t=0}=(\phi,\psi)$. Then we can derive the Euler-Lagrange equation 
for $V_\eps$ as follows.

For small $\delta\in \R$ and $\phi\in C^\infty_0(\R^{n+1}_+, {\it so}(m))$, consider
$V_\eps^\delta(x,t)=(V_\eps\exp_e(\delta\phi))(x,t)$.
Then it is easy to check that $V_\eps^\delta\in\mathcal{H}$
and satisfies the boundary condition $(V_\eps^\delta, \partial_t V_\eps^\delta)\big|_{t=0}=(\phi,\psi)$.
Hence, by the minimality of $U_\eps$, we have 
\[
E_\eps(V_\eps)\le E_\eps(V_\eps^\delta)
\]
so that
\[
\frac{d}{d\delta}\big|_{\delta=0}E_\eps(V_\eps^\delta)=0.
\]
By direct calculations, using $\phi^T+\phi=0$ and the fact
\[
\frac{d}{d\delta}\big|_{\delta=0}V_\eps^\delta=V_\eps\phi,
\]
we obtain that
\begin{align*}
0&=\iint_{\R^{n+1}_+}e^{-t/\eps}\Big(\eps^2\langle \partial_t^2V_\eps, \partial_t^2(V_\eps\phi)\rangle 
+\langle\nabla V_\eps, \nabla(V_\eps\phi)\rangle\Big)\,dxdt\\
&=\iint_{\R^{n+1}_+}e^{-t/\eps}\Big(\eps^2\langle \partial_t^2 V_\eps, 2\partial_tV_\eps\partial_t\phi+V_\eps
\partial_t^2\phi\rangle +\langle\nabla V_\eps, V_\eps\nabla \phi)\rangle\Big)\,dxdt,
\end{align*}
where we have used the skew-symmetry of $\phi$ that guarantees
\[
\langle \partial_t^2 U_\eps, \partial_t^2U_\eps\phi\rangle=0\
\ {\rm{and}}\ \ \langle\nabla V_\eps, \nabla V_\eps\phi\rangle =0.
\]
Now, we substitute $\phi=e^{t/\eps}\psi$ with 
$\psi\in C^\infty_0(\R^{n+1}_+, {\it so}(m))$ to obtain
\begin{align*}
\iint_{\R^{n+1}_+}\Big(\big\langle \partial_t^2V_\eps, 2\partial_tV_\eps\big(\eps^2\partial_t\psi+{\eps}\psi\big)+V_\eps\big({\eps}\partial_t\psi+\eps^2\partial_t^2\psi  +\psi\big)\big\rangle +\big\langle\nabla V_\eps, V_\eps\nabla \psi\big\rangle\Big)\,dxdt=0. 
\end{align*}
This is equivalent to say that $V_\eps$ is a weak solution of
\begin{align}\label{eps-wave1}
&\partial_t(V_\eps^T\partial_tV_\eps) -{\rm{div}}(V_\eps^T\nabla V_\eps)=G_\epsilon,
\end{align}
where
\begin{align}\label{eps-wave2}
G_\eps:=-(\eps^2\partial_t^2-\eps\partial_t)(V_\eps^T\partial_t^2V_\eps)
+2\eps^2\partial_t(\partial_t V_\eps^T\partial_t^2V_\eps)
-2\eps\partial_tV_\eps^T\partial_t^2V_\eps.
\end{align}

Since $V_\eps$ satisfies the energy bounds similar to \eqref{H1-est}, \eqref{t-derivate-est}, and \eqref{tt-derivative-est}, we have
\begin{align*}
\int_{0}^T\int_{\R^n} (|\partial_t V_\eps|^2+|\nabla V_\eps|^2)\,dxdt \le CT,
\end{align*}
and
\begin{align*}
   \iint_{\R^n\times\R_+}|\partial_t^2 V_\eps|^2\,dxdt\le C\eps^{-1}.
\end{align*}
Hence, after passing to a subsequence, we may assume that
there exists a map $V\in H^1_{\rm{loc}}(\R^n\times (0,\infty), SO(m))$ such that
$$V_\eps\rightarrow V \ \ {\rm{in}}\ \ L^2_{\rm{loc}}(\R^n\times (0,\infty)),$$
and
$$
(\partial_t V_\eps, \ \nabla V_\eps)\rightharpoonup 
(\partial_t V, \ \nabla V)\ \ {\rm{in}}\ \ L^2(\R^n\times (0,\infty)).
$$

It is not hard to verify that for
any $\phi\in C_0^(\R^n\times (0,\infty), {\it so}(m))$,
\begin{align*}
\iint_{\R^n\times (0,\infty)}
\langle G_\eps, \phi\rangle\,dxdt
&=-\iint_{\R^n\times (0,\infty)}\langle V_\eps^T\partial_t^2V_\eps,
(\eps^2\partial_t^2+\eps\partial_t)\phi\rangle\,dxdt\\
&\quad-2\iint_{\R^n\times (0,\infty)}\langle\partial_tV_\eps^T\partial_t^2V_\eps, (\eps^2\partial_t+2\eps)\phi\rangle\,dxdt.
\end{align*}
Hence
\begin{align*}
\Big|\iint_{\R^n\times (0,\infty)}
\langle G_\eps, \phi\rangle\,dxdt\Big|
&\le C\eps \Big(1+\big\|\partial_t V_\eps\big\|_{L^2({\rm{supp}}\phi)}\Big)\big\|\partial_t^2V_\eps\big\|_{L^2(\R^n\times (0,\infty))}\\
&\le C\eps^{\frac12}\to 0.
\end{align*}
While
\begin{align*}
&\iint_{\R^n\times(0,\infty)}\langle\partial_t(V_\eps^T\partial_tV_\eps) -{\rm{div}}(V_\eps^T\nabla V_\eps),\phi\rangle\,dxdt\\
&=-\iint_{\R^n\times(0,\infty)}\big(\langle V_\eps^T\partial_tV_\eps,\partial_t\phi\rangle -\langle V_\eps^T\nabla V_\eps,\nabla\phi\rangle\big)\,dxdt\\
&\rightarrow -\iint_{\R^n\times(0,\infty)}\big(\langle V^T\partial_tV,\partial_t\phi\rangle -\langle V^T\nabla V,\phi\rangle\big)\,dxdt
\end{align*}
Putting these estimates together, we obtain
\begin{align}\label{wavemap-so(m)1}
 \iint_{\R^{n}\times (0,\infty)} \Big(\langle\nabla V, V\nabla\psi\rangle-\langle \partial_t V, V\partial_t\psi\rangle\Big)\,dxdt =0.  
\end{align}
Or equivalently, $V$ is a weak solution of the wave map into $SO(m)$:
\begin{align}\label{wavemap-so(m)2}
\partial_t(V^T\partial_t V)-\nabla\cdot(V^T\nabla V)=0 
\ \ {\rm{in}}\ \ \R^{n}\times (0,\infty).    
\end{align}

It is easy to see that $V$ satisfies the trace condition
$V(x,0)=\phi(x)$. To show $\partial_t V(x,0)=\psi(x)$, we need
the following estimate on $V_\eps^T \partial_t^2 V_\eps$, stated for $U_\eps^T\partial_t^2U_\eps$ similar to Lemma \ref{dualestimate}. 

\begin{lemma}\label{dualestimate2}
Assume $\phi\in \dot{H}^1(\mathbb R^n,SO(m))$ 
and $\psi\in ({H}^1\cap L^\infty)(\mathbb R^n, \phi^*T(SO(m)))$.
Let $U_\eps$ be a minimizer of $I_\eps$ in $\mathcal{H}_{\phi,\eps\psi}$. There exists  $C>0$ such that for any $h\in L^\infty(\mathbb{R}^n, {\it so}(m))\cap \dot{H}^1(\mathbb{R}^n, {\it so}(m))$,
it holds that for a.e. $t>0$, 
\begin{align}\label{dual_est2}
 \Big|\int_{\mathbb{R}^n} 
 \langle\partial_t^2U_\eps(x,t), U_\eps(x,t) h(x)\rangle\,dx\Big|\le Ce^t\eps^2\Big(\|h\|_{L^\infty(\mathbb {R}^n)}+\|\nabla h\|_{L^2(\mathbb{R}^n)}\Big).
\end{align}    
\end{lemma}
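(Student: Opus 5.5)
We follow the proof of Lemma \ref{dualestimate} line by line, replacing the cross product with $u_\eps$ by the pairing $\langle \cdot, U_\eps h\rangle$ with $h$ skew-symmetric. The first step is to record the weak Euler--Lagrange equation for $U_\eps$ as a minimizer of $I_\eps$. For a test map $\Phi\in C_0^\infty(\R^n\times\R_+,{\it so}(m))$ we use the right multiplication variation $U_\eps\exp_e(\delta\Phi)$. This preserves $SO(m)$-valuedness and, when $\Phi$ vanishes near $t=0$, also the boundary data. It yields
\[
\iint e^{-t}\Big(\langle \partial_t^2U_\eps,\partial_t^2(U_\eps\Phi)\rangle+\eps^2\langle\nabla U_\eps,\nabla(U_\eps\Phi)\rangle\Big)\,dxdt=0.
\]
By skew-symmetry, $\langle A, A\Phi\rangle=\mathrm{tr}(A^TA\Phi)=0$ for every matrix $A$, since $A^TA$ is symmetric. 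Hence the terms $\langle\partial_t^2U_\eps,\partial_t^2U_\eps\Phi\rangle$ and $\langle\nabla U_\eps,\nabla U_\eps\Phi\rangle$ drop out, and we are left with
\[
\iint e^{-t}\Big(\langle \partial_t^2U_\eps,U_\eps\partial_t^2\Phi+2\partial_tU_\eps\partial_t\Phi\rangle+\eps^2\langle\nabla U_\eps,U_\eps\nabla\Phi\rangle\Big)=0 .
\]
By density and the finiteness of $I_\eps(U_\eps)$, this identity extends to $\Phi=\eta(t)h(x)$ with $\eta\in C^{1,1}$ of at most linear growth vanishing near $0$, and $h\in L^\infty\cap\dot H^1$ skew-valued. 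We check this extension with a cutoff in $x$ and in $t$, using the weight $e^{-t}$ and $|U_\eps|=\sqrt m$.

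Next we take $\Phi=\eta_\delta(t)h(x)$ with exactly the same $\eta_\delta(t)=\delta\eta((t-T)/\delta)$ as in Lemma \ref{dualestimate}.

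The first term is $\frac2\delta\int_T^{T+\delta}e^{-t}\int\langle\partial_t^2U_\eps,U_\eps h\rangle\,dx\,dt$. At Lebesgue points $T$ it converges to $2e^{-T}\int\langle\partial_t^2U_\eps,U_\eps h\rangle(x,T)\,dx$.

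The second term is bounded by
\[
4\|h\|_{L^\infty}\Big(\iint e^{-t}|\partial_t^2U_\eps|^2\Big)^{1/2}\Big(\iint e^{-t}|\partial_tU_\eps|^2\Big)^{1/2}.
\]
The weighted $L^2$ norm of $\partial_tU_\eps$ is controlled by that of $\partial_t^2U_\eps$ plus $\eps^2\|\psi\|_{L^2}^2$; this uses a Hardy-type inequality with the exponential weight together with the trace $\partial_tU_\eps(\cdot,0)=\eps\psi$. Both factors are $O(\eps)$ by Lemma \ref{nonempty} (the $SO(m)$ version \eqref{energyupbound}), so this term is $O(\eps^2)\|h\|_{L^\infty}$.

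The third term tends, as $\delta\to0$, to $2\eps^2\iint e^{-t}(t-T)^+\langle\nabla U_\eps,U_\eps\nabla h\rangle$. Since $|U_\eps\nabla h|=|\nabla h|$ for orthogonal $U_\eps$, it is bounded by $C\eps^2\|\nabla h\|_{L^2}\sup_k\big(\int_{T+k}^{T+k+1}\!\int|\nabla U_\eps|^2\big)^{1/2}$. Here we use \eqref{energy_ineq6}, whose derivation relies only on Lemma \ref{energyestimate} and \eqref{energy_ineq1}, both valid for $SO(m)$ as noted above. The $e^{-t}$ factor is absorbed exactly as in Lemma \ref{dualestimate}, with the shift $e^{-T}$ being cancelled when we multiply through by $e^{T}$.

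Multiplying the resulting identity by $e^T/2$ gives \eqref{dual_est2} for a.e. $T>0$.

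The main obstacle is the justification step: admitting the non-compactly-supported test function $\eta_\delta h$ with merely $h\in L^\infty\cap\dot H^1$. This requires the variation $U_\eps\exp_e(\delta\eta_\delta h)$ to stay in $\mathcal H_{\phi,\eps\psi}$. In particular $\partial_t^2$ of the perturbed map must be weighted-$L^2$, which holds because $\eta_\delta''$ is bounded, $\eta_\delta$ grows linearly against the weight $e^{-t}$, and $h$ is bounded. It also requires differentiating under the integral in $\delta$. We would handle this by working directly with the variation $U_\eps\exp_e(\delta\Phi)$, whose $\delta$-derivative is dominated thanks to $\|h\|_{L^\infty}<\infty$, rather than by approximating $h$.
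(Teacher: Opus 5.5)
Your core argument is the paper's. You use the right-multiplication variation $U_\eps\exp_e(\delta\Phi)$ with the cancellation $\mathrm{tr}(A^TA\Phi)=0$, the same $\eta_\delta$, and the same three-term analysis: a Lebesgue-point limit, Cauchy--Schwarz with the weighted Hardy control of $\partial_tU_\eps$ by $\partial_t^2U_\eps$ and $\eps\psi$, and the sum over unit time intervals using \eqref{energy_ineq6}. Those estimates are correct.

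The step that fails is your justification for admitting a general $h\in L^\infty\cap\dot H^1$. Boundedness of $h$ does not keep the variation in $\mathcal H$. The second time derivative $\partial_t^2\big(U_\eps e^{\delta\eta_\delta h}\big)$ contains the term $U_\eps\big(\delta\eta_\delta''h+\delta^2(\eta_\delta')^2h^2\big)e^{\delta\eta_\delta h}$. For $t>T+\delta$ this equals $4\delta^2U_\eps h^2e^{\delta\eta_\delta h}$, which is not in $L^2(\R^n)$ when, for example, $h$ is a nonzero constant skew matrix (an element of $L^\infty\cap\dot H^1$).

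Your cutoff-in-$x$ route fails for the same reason. In fact the first term of your identity, and the left side of \eqref{dual_est2} itself, need not be defined, since $\partial_t^2U_\eps(\cdot,t)$ is only in $L^2(\R^n)$.

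The fix is to require $h\in L^2$ in addition. For $h\in L^2\cap L^\infty\cap\dot H^1$ your direct variation works verbatim, using $|h^2|\le\|h\|_{L^\infty}|h|$ and $|\nabla e^{\delta\eta_\delta h}|\le\delta|\eta_\delta||\nabla h|$. This class contains the paper's test functions $h\in C_0^\infty(\R^n,{\it so}(m))$, which is how the paper sidesteps the issue. It also contains the only $h$ used afterwards, $h=\phi^T\psi$. This $h$ is skew with $|\phi^T\psi|=|\psi|\in L^2\cap L^\infty$, and $\nabla(\phi^T\psi)=(\nabla\phi)^T\psi+\phi^T\nabla\psi\in L^2$.
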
 
\begin{proof} It is similar to that of Lemma \ref{dualestimate}. Here we give a sketch. First, as in the previous subsection, we know that $U_\eps$ satisfies
\begin{align*}
0&=\iint_{\mathbb{R}^n\times\R_+}
\Big[\langle e^{-t}\partial_t^2U_\eps, \partial_t^2(U_\eps \eta_\delta h)\rangle +\eps^2 \langle e^{-t}\nabla U_\eps,  \nabla(U_\eps\eta_\delta h)\rangle \Big]\,dxdt\\
&=\iint_{\mathbb{R}^n\times\R_+}
\Big[\langle e^{-t}\partial_t^2U_\eps, (U_\eps \eta_\delta''+2\partial_tU_\eps\eta_\delta')h\rangle +
\eps^2 \langle e^{-t}\nabla U_\eps, U_\eps \nabla h\rangle \eta_\delta\Big]\,dxdt,
\end{align*}
where $h\in C_0^\infty(\R^n, {\it so}(m))$ and $\eta_\delta\in C^{1,1}(\R)$ is given by Lemma \ref{dualestimate}.

As in Lemma \ref{dualestimate}, we can bound
\begin{align*}
\iint_{\R^n\times (0,\infty)}
e^{-t}\langle \partial_t^2 U_\eps, U_\eps \eta_\delta''h\rangle\,dxdt
&=\frac{2}{\delta}\int_{T}^{T+\delta}\int_{\R^n} 
e^{-t}\langle \partial_t^2 U_\eps, U_\eps h\rangle \,dxdt\\
&\rightarrow  2e^{-T}
\int_{\R^n} \langle\partial_t^2 U_\eps, U_\eps h\rangle\,dx, \  {\rm{as}}\ \delta\to 0,
\end{align*}
\begin{align*}
&\Big|\iint_{\R^n\times (0,\infty)}
e^{-t}\langle \partial_t^2 U_\eps, 2\partial_t U_\eps \eta_\delta'h\rangle\,dxdt\Big|\\
&=\Big|4\int_T^{T+\delta} \frac{t-T}{\delta}e^{-t}\int_{\R^n}
\langle\partial_t^2 U_\eps, \partial_t U_\epsilon h\rangle\,dxdt
+4\int_{T+\delta}^{\infty}e^{-t}\int_{\R^n}\langle\partial_t^2 U_\eps, \partial_t U_\eps h\rangle \,dxdt\Big|\\
&\le C\big\|h\big\|_{L^\infty(\R^n)}
\Big(\iint_{\R^n\times (0,\infty)} e^{-t}|\partial_t^2 U_\eps|^2\,dxdt\Big)^\frac12
\Big(\iint_{\R^n\times (0,\infty)} e^{-t}|\partial_t U_\eps|^2\,dxdt\Big)^\frac12\\
&\le C\eps^2 \big\|h\big\|_{L^\infty(\R^n)},
\end{align*}
and
\begin{align*}
&\lim_{\delta\to 0}
\Big|\iint_{\R^n\times (0,\infty)} e^{-t}\langle\nabla U_\eps, U_\eps\nabla h\rangle\,dxdt\Big|\\
&=2\Big|\iint_{\R^n\times (0,\infty)}e^{-t}(t-T)^+\langle\nabla U_\eps, U_\eps\nabla h\rangle\,dxdt\Big|\\
&\le C\big\|\nabla h\big\|_{L^2(\R^n)} \sup_{k\ge 0}
\Big(\iint_{\R^n\times [T+k, T+k+1]}|\nabla U_\eps|^2\,dxdt\Big)^\frac12\\
&\le C\big\|\nabla h\big\|_{L^2(\R^n)}.
\end{align*}
Putting all these estimates together yields \eqref{dualestimate2}.
\end{proof}

Note that translating back to $V_\eps$, 
\eqref{dualestimate} yields that for a.e. $t>0$,
\begin{align}\label{dual_est3}
 \Big|\int_{\mathbb{R}^n} 
 \langle\partial_t^2V_\eps(x,t), V_\eps(x,t) h(x)\rangle\,dx\Big|\le Ce^t\Big(\|h\|_{L^\infty(\mathbb {R}^n)}+\|\nabla h\|_{L^2(\mathbb{R}^n)}\Big).
\end{align}  

With the help of \eqref{dual_est3}, we can sketch the proof of $\partial_t V(x,0)=\psi(x)$ for $x\in\R^n$ as follows. 
For $\eta\in C_0(\R)$,
\begin{align*}
\int_0^\infty\eta(t)\int_{\R^n}\partial_t\langle \partial_tV_\eps, V_\eps h\rangle\,dx\,dt
=-\int_0^\infty\eta'(t)\int_{\R^n}\langle \partial_tV_\eps, V_\eps h\rangle\,dx\,dt
-\eta(0)\int_{\R^n}\langle\psi,\phi\rangle h\,dx.
\end{align*}
Since 
$$\partial_t\langle \partial_tV_\eps, V_\eps h\rangle
=\langle\partial_t^2 V_\eps, V_\eps h\rangle
+\langle\partial_tV_\eps, \partial_tV_\eps h\rangle
=\langle\partial_t^2 V_\eps, V_\eps h\rangle,
$$
it follows from \eqref{dual_est3} that we can pass to the limit $\eps\to 0$ in the above identity to achieve
\begin{align*}
\int_0^\infty\eta(t)\int_{\R^n}\partial_t\langle \partial_tV, V h\rangle\,dx\,dt
=-\int_0^\infty\eta'(t)\int_{\R^n}\langle \partial_tV, V h\rangle\,dx\,dt
-\eta(0)\int_{\R^n}\langle\psi,\phi h\rangle\,dx.
\end{align*}
On the other hand, one has
\begin{align*}
\int_0^\infty\eta(t)\int_{\R^n}\partial_t\langle \partial_tV, V h\rangle\,dx\,dt
&=-\int_0^\infty\eta'(t)\int_{\R^n}\langle \partial_tV, V h\rangle\,dx\,dt\\
&\quad-\eta(0)\int_{\R^n}\langle\partial_tV(x,0),\phi(x)h(x)\rangle\,dx.
\end{align*}
Therefore, by choosing $\eta(0)\not=0$ we obtain 
\begin{align*}
\int_{\R^n}\langle\partial_tV(x,0)-\psi(x),\phi(x)h(x)\rangle\,dx
=0.
\end{align*}
Since $\partial_t V(x,0)-\psi(x)\in T_{\phi(x)}{SO(m)}$, it follows
that $(\partial_t V(x,0)-\psi(x))^T\phi(x)$ is skew-symmetric. 
Since $h\in C_0^\infty(\R^n, {\it so}(m))$ is an arbitrary skew-symmetric matrix, we must have that 
\[
\partial_t V(x,0)-\psi(x) =0
\ \ {\rm{ for\ a.e.}}\ x\in\R^n.
\]

\medskip
Finally, as in Theorem \ref{globalenergyineq1} the global energy inequality \eqref{globalenergyineq2} can be proved 
once we show the following improved estimate, which is analogous to Lemma \ref{sharp_est0}. 

\begin{lemma}\label{energyineq2}
Assume $\phi\in \dot{H}^1(\mathbb R^n,SO(m))$ 
and $\psi\in ({H}^1\cap L^\infty)(\mathbb R^n, \phi^*T(SO(m)))$.
Let $U_\eps$ be a minimizer of $I_\eps$ in $\mathcal{H}_{\phi,\eps\psi}$.
Then \begin{align}\label{sharp_est2}
 E_\eps(0)
 =I_\eps(0)-I_\eps'(0)+\frac12 H_\eps(0)\le
 \frac12\eps^2\int_{\mathbb{R}^n}
 (|\nabla\phi|^2+|\psi|^2)(x)\,dx+C\eps^3.
\end{align}
\end{lemma}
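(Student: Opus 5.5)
We follow the proof of Lemma \ref{sharp_est0} term by term and write
\[
E_\eps(0)=I_\eps(0)-I_\eps'(0)+\frac12 H_\eps(0),\qquad I_\eps(t)=\frac12\int_{\R^n}|\partial_tU_\eps|^2(x,t)\,dx .
\]
The first and third terms are immediate. From the boundary condition, $I_\eps(0)=\frac12\eps^2\int_{\R^n}|\psi|^2\,dx$. Since $H_\eps(0)=I_\eps(U_\eps)$ and $U_\eps$ is a minimizer, comparing with the competitor $\phi\exp_e(t\eps\Omega)$ and using \eqref{energyupbound} gives
\[
\frac12 H_\eps(0)\le \frac12\eps^2\int_{\R^n}|\nabla\phi|^2\,dx+C\eps^3 .
\]
Everything therefore reduces to the claim $|I_\eps'(0)|\le C\eps^3$.

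For this claim we replace the cross product by the left-trivialization. Set $W_\eps=U_\eps^T\partial_tU_\eps$. Since $U_\eps\in SO(m)$, the matrix $W_\eps$ is skew-symmetric and $|\partial_tU_\eps|=|W_\eps|$, so $I_\eps(t)=\frac12\int|W_\eps|^2\,dx$. We have $\partial_tW_\eps=\partial_tU_\eps^T\partial_tU_\eps+U_\eps^T\partial_t^2U_\eps$. The first summand is symmetric, so it pairs to zero against the skew matrix $W_\eps$. Hence
\[
I_\eps'(t)=\int_{\R^n}\langle U_\eps^T\partial_t^2U_\eps,W_\eps\rangle\,dx=\int_{\R^n}\langle \partial_t^2U_\eps,U_\eps W_\eps\rangle\,dx .
\]
As in \eqref{energy_est1}, we average over $(0,\delta)$ and split $W_\eps(t)=W_\eps(0)+(W_\eps(t)-W_\eps(0))$, which produces two terms $B_\eps$ and $A_\eps$.

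In $B_\eps$ we have $W_\eps(0)=\eps\phi^T\psi=\eps\Omega$, which is a skew-symmetric, time-independent test matrix. Lemma \ref{dualestimate2} with $h=\Omega$ then gives
\[
B_\eps\le C\eps\cdot\eps^2\big(\|\Omega\|_{L^\infty}+\|\nabla\Omega\|_{L^2}\big)\le C\eps^3 .
\]
Here $\|\Omega\|_{L^\infty}\le C\|\psi\|_{L^\infty}$, and $|\nabla\Omega|\le |\nabla\phi||\psi|+|\nabla\psi|$ is bounded in $L^2$ because $\psi\in L^\infty\cap H^1$ and $\phi\in\dot H^1$. Lemma \ref{dualestimate2} is stated for smooth compactly supported $h$, so we extend it to this $h$ by density. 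This is legitimate because the right-hand side only involves the $L^\infty$ and $\dot H^1$ norms, and the left-hand side is continuous in $h$ for fixed $t$; the $e^t$ factor is harmless for $t\le\delta$.

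In $A_\eps$ we write $W_\eps(t)-W_\eps(0)=\int_0^t\partial_tW_\eps$. Using $|U_\eps|=\sqrt m$ and $|\partial_tW_\eps|\le|\partial_tU_\eps|^2+C|\partial_t^2U_\eps|$, we get
\[
A_\eps\le \frac{C}{\delta}\int_0^\delta\!\!\int_{\R^n}|\partial_t^2U_\eps(t)|\int_0^t\big(|\partial_t^2U_\eps|+|\partial_tU_\eps|^2\big)\,d\tau\,dx\,dt .
\]
The part involving $|\partial_t^2U_\eps|$ alone is handled by the Cauchy--Schwarz argument of Lemma \ref{sharp_est0}; it is bounded by $C\int_0^\delta\!\int|\partial_t^2U_\eps|^2\to0$.

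\textbf{Main obstacle.} The quadratic term $|\partial_tU_\eps|^2$ is new compared with the sphere case. There the cross product killed it, since $\partial_tu\times\partial_tu=0$. Here I would first try to remove it algebraically. Pairing with the skew matrix $U_\eps^T\partial_t^2U_\eps$, only the skew part of $\partial_tW_\eps$ can contribute, and $\partial_tU_\eps^T\partial_tU_\eps$ is symmetric. So, as with the cross product, one may replace $\partial_tW_\eps$ by its skew part, which is the skew part of $U_\eps^T\partial_t^2U_\eps$. This removes the quadratic term and gives $A_\eps\to0$ as $\delta\to0$.

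If that algebra fails, the fallback is a crude bound. Writing $Q=\int_0^\delta\!\int|\partial_t^2U_\eps|^2$, Cauchy--Schwarz gives
\[
\frac1\delta\int_0^\delta\!\!\int_{\R^n}|\partial_t^2U_\eps|\int_0^t|\partial_tU_\eps|^2\le\frac1\delta\,Q^{1/2}\,\delta^{1/2}\Big(\int_0^\delta\!\!\int_{\R^n}\Big(\int_0^t|\partial_tU_\eps|^2\Big)^2\Big)^{1/2}.
\]
One would then control $\int_0^t|\partial_tU_\eps|^2$ in $L^2_x$ by $\sqrt m\,t^{1/2}\big(\int_0^t|\partial_tU_\eps|^2\big)^{1/2}$, using $|\partial_tU_\eps|\le|\partial_t\cdot|$-type bounds near $t=0$. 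The aim is a bound $o(1)$ as $\delta\to0$, using $\partial_tU_\eps\in C([0,T],L^2_{\rm loc})$ with trace $\eps\psi$ bounded.

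Letting $\delta\to0$ then gives $|I_\eps'(0)|\le C\eps^3$. Combined with the bounds on $I_\eps(0)$ and $\frac12H_\eps(0)$, this yields \eqref{sharp_est2}.
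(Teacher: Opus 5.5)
Your proposal follows the paper's proof essentially step for step. It uses the same three-term split of $E_\eps(0)$ and the same bound on $\frac12H_\eps(0)$ from the competitor $\phi\exp_e(t\eps\Omega)$ and \eqref{energyupbound}. It also uses the same $A_\eps/B_\eps$ decomposition of $\frac1\delta\int_0^\delta I_\eps'$: $B_\eps$ is controlled by Lemma~\ref{dualestimate2} with $h=\Omega=\phi^T\psi$, and $A_\eps$ by the Cauchy--Schwarz argument of Lemma~\ref{sharp_est0}.

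You are right that the quadratic term $\partial_tU_\eps^T\partial_tU_\eps$ in $\partial_tW_\eps$ has to be addressed; the paper passes over it silently when it asserts $A_\eps\le C\int_0^\delta\int|\partial_t^2U_\eps|^2$. However, the reason you give for discarding it is false: $U_\eps^T\partial_t^2U_\eps$ is \emph{not} skew-symmetric. Differentiating $U_\eps^T\partial_tU_\eps+\partial_tU_\eps^TU_\eps=0$ in $t$ shows its symmetric part is exactly $-\partial_tU_\eps^T\partial_tU_\eps$.

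The correct argument is short. $W_\eps(t)$ is skew for every $t$, so $\partial_tW_\eps$ is skew and equals its own skew part. Since $\partial_tU_\eps^T\partial_tU_\eps$ is symmetric, $\partial_tW_\eps=\mathrm{skew}(U_\eps^T\partial_t^2U_\eps)$; the symmetric parts of the two summands cancel exactly. Hence $|\partial_tW_\eps|\le|U_\eps^T\partial_t^2U_\eps|=|\partial_t^2U_\eps|$ pointwise. Then $|W_\eps(t)-W_\eps(0)|\le\int_0^t|\partial_\tau^2U_\eps|\,d\tau$, and $A_\eps\le C\int_0^\delta\int_{\R^n}|\partial_t^2U_\eps|^2\,dx\,dt\to0$ exactly as for the sphere. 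With this fix you should delete the fallback paragraph, which is unnecessary and, as written, is not a working argument.

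Minor: Lemma~\ref{dualestimate2} is already stated for all $h\in L^\infty\cap\dot H^1$, and $\Omega$ qualifies, so no density step is needed. If you do want one, note that $C_0^\infty$ is not dense in the $L^\infty$ norm. What works is approximating $\Omega\in H^1\cap L^\infty$ by smooth compactly supported $h_k$ with $\|h_k\|_{L^\infty}+\|\nabla h_k\|_{L^2}$ uniformly bounded and $h_k\to\Omega$ in $L^2$.
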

\begin{proof} Similar to Lemma \ref{sharp_est0}. First, by definition
\[
I_\eps(0)=\frac12\int_{\R^n}\eps^2|\psi|^2\,dx,
\]
and by \eqref{energyupbound},
\[
\frac12 H_\eps(0)=\frac12 I_\eps(U_\eps)
\le \frac12(\int_{\R^n}\eps^2|\nabla\phi|^2\,dx+C\eps^3).
\]
Thus, it remains to show
\[
|I_\eps'(0)|\le C\eps^3.
\]

Note that $U_\eps:\R^n\times (0,\infty)\to SO(m)$, one has
\[
I_\eps(t)=\int_{\R^n}|U_\eps^T\partial_t U_\eps|^2\,dx
\]
Hence, for $\delta>0$ small, we can estimate
\begin{align*}
\Big|\frac{1}{\delta}\int_0^\delta I_\eps'(t)\,dt\Big|
&\le  
\Big|\frac{2}{\delta}\int_0^{\delta} \langle \partial_t(U_\eps^T\partial_tU_\eps), U_\eps^T\partial_t U_\eps-U_\eps^T(x,0)\partial_t U_\eps(x,0)\rangle\,dxdt\Big|\\
&+
\Big|\frac{2}{\delta}\int_0^{\delta} \langle \partial_t(U_\eps^T\partial_tU_\eps), U_\eps^T(x,0)\partial_t U_\eps(x,0)\rangle\,dxdt\Big|.   
\end{align*}
Now we can apply the same estimates as in Lemma \ref{sharp_est0}, with \eqref{dual_est} replaced by
\eqref{dual_est2}, to bound
\begin{align*}
&\Big|\frac{2}{\delta}\int_0^{\delta} \langle \partial_t(U_\eps^T\partial_tU_\eps), U_\eps^T(x,0)\partial_t U_\eps(x,0)\rangle\,dxdt\Big|\\
&\le C\eps^3 \big(\|\psi\|_{L^\infty(\R^n)} 
+\|\nabla\psi\|_{L^2(\R^n)}+\|\psi\|_{L^\infty(\R^n)}\|\nabla\phi\|_{L^2(\R^n)}\big)\le C\eps^3,
\end{align*}
and
\begin{align*}
&\Big|\frac{2}{\delta}\int_0^{\delta} \big\langle \partial_t(U_\eps^T\partial_tU_\eps), U_\eps^T\partial_t U_\eps-U_\eps^T(x,0)\partial_t U_\eps(x,0)\big\rangle\,dxdt\Big|\\
&\le C\int_0^{\delta}\int_{\R^n}|\partial_t^2 U_\eps|^2\,dxdt\rightarrow 0, 
\end{align*}
as $\delta\to 0$.   Putting all these estimates together yields \eqref{sharp_est2}.
\end{proof}

\bigskip
Finally, we would like to raise the question whether
Theorem \ref{main} holds for Riemannian homogeneous
spaces, or even general compact smooth target manifolds.

\bigskip
\noindent{\bf Acknowledgments}. The first author is partially supported by an AMS-Simons travel grant. The second author is partially supported by NSF DMS 2453789 and Simons Travel Grant TSM-00007723.


\begin{thebibliography}{}

\bibitem{Giorgi1996} E. De Giorgi, {\it Conjectures concerning some evolution problems}. Duke Math. J. 81 (1996), 255–268.

\bibitem{Freire1996} A. Freire, {\em Global weak solutions of the wave map system to compact homogeneous spaces.}
Manuscripta Math. 91 (1996), no. 4, 525-533.


\bibitem{FreireMullerStruwe1997} A. Freire, S. M\"uller,M. Struwe,
{\em Weak convergence of wave maps from  $(1+2)$-dimensional Minkowski space to Riemannian manifolds.}
Invent. Math. 130 (1997), no. 3, 589-617.

\bibitem{FreireMullerStruwe1998} A. Freire, S. M\"uller,M. Struwe,
{\em Weak compactness of wave maps and harmonic maps}.
Ann. Inst. H. Poincar\'e C Anal. Non Lin\'daire 15 (1998), 
no. 6, 725-754.

\bibitem{MullerStruwe1996} S. M\'uller, M. Struwe, 
{\em Global existence of wave maps in I+2 dimensions for finite
energy data}. Top. Merhods Nonlinear Anaiy.sis, Vol. 7, 1996, pp. 245-259.


\bibitem{Serra-Tilli2012} E. Serra, P. Tilli,
{\em Nonlinear wave equations as limits of convex minimization problems:
proof of a conjecture by De Giorgi}.
Ann. Math. 175 (2012), no. 3, 1551-1574.

\bibitem{Serra-Tilli2016} E. Serra, P. Tilli,
{\em A minimization approach to hyperbolic Cauchy problems}. 
J. Eur. Math. Soc. 18 (2016), no. 9, 2019–2044.

\bibitem{Shatah1988} J. Shatah,  
{\em Weak solutions and development of singularities of the 
$SU(2)$ $\sigma$-model.}
Comm. Pure Appl. Math. 41 (1988), no. 4, 459-469.

\bibitem{Shatah-Struwe} J. Shatah, M. Struwe, 
Geometric wave equations.
Courant Lect. Notes Math., 2
New York University, Courant Institute of Mathematical Sciences, New York; American Mathematical Society, Providence, RI, 1998. viii+153 pp.

\bibitem{Zhou1999} Y. Zhou, {\it Global weak solutions for 1 + 2 dimensional wave maps into homogeneous spaces}, 
Ann. Inst. H. Poincar\'e C Anal. Non Lin\'eaire 16 (1999), 
no. 4, 411–422.
\end{thebibliography}
\end{document}